\newcommand{\Rlogo}{\protect\includegraphics[height=1.8ex,keepaspectratio]{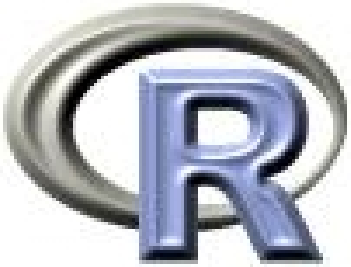}}
\newcommand{\egaldef}{\mathrel{:=}}
\newcommand*{\CF}{\mathcal{F}}
\newcommand*{\E}{\mathbb{E}}
\newcommand*{\EE}[1]{\E\left[#1\right]}
\newcommand*{\EFn}[1]{\E\left[#1\middle| \CF_n\right]}
\newcommand*{\PP}{\mbox{$\mathbb{P}$}}
\newcommand*{\prb}[1]{\PP\left[#1\right]}
\newcommand*{\R}{\mathbb{R}}
\newcommand*{\xR}{\mathbb{R}}
\newcommand*{\xN}{\mathbb{N}}
\newcommand*{\scal}[1]{\left\langle #1 \right\rangle} 
\newcommand*{\nrm}[1]{\left\| #1 \right\|}            
\newcommand*{\abs}[1]{\left| #1 \right|}              
\newcommand*{\Var}{\mathbf{Var}}                      
\DeclareMathOperator{\SpanOp}{span}
\newcommand*{\vect}[1]{\SpanOp(#1)}
\newcommand*{\ind}[1]{\mathbf{1}_{#1}}
\newcommand{\CL}{\mathcal{L}}
\newcommand{\idt}{\mathbf{I}_H}
\newcommand*{\stp}[1]{U_{#1}}
\newcommand{\Zbar}{\overline{Z}}
\newcommand{\Tbar}{\overline{T}}
\newcommand{\muDisc}{\mu_{d}}
\newcommand{\muDiff}{\mu_c}
\newcommand*{\lUn}{L^1}
\newcommand*{\lDeux}{L^2}
\newcommand{\lmin}{\lambda_{min}}
\newcommand{\cG}{c_\gamma}
\newcommand{\covLimite}{\Sigma}
\newcommand{\MM}{\foreignlanguage{french}{Médiamétrie}}
\newcommand{\Clang}{\textbf{C}}
\newcommand{\newM}{\widetilde{M}}
\newcommand{\limite}[2]{\xrightarrow[#1]{#2}}
\DeclareMathOperator{\cvp}{\limite{n \to \infty}{P}}
\DeclareMathOperator{\cvl}{\limite{n \to \infty}{\CL}}
 \newcommand*{\ball}{\mathcal{B}(0,A)}
\newtheorem{thm}{Theorem}[section]
\newtheorem{cor}[thm]{Corollary}
\newtheorem{lem}[thm]{Lemma}
\newtheorem{prop}[thm]{Proposition}
\newtheorem{rem}{Remark}
\begin{document}

\title{Efficient and fast estimation of the geometric median in Hilbert spaces with an averaged stochastic gradient algorithm.}

\author{Herv\'e \textsc{Cardot}, Peggy \textsc{C\'enac}, Pierre-Andr\'e \textsc{Zitt} \\ Institut de Math\'ematiques de Bourgogne, Universit\'e de Bourgogne, \\
9 Rue Alain Savary, 21078 Dijon, France \\
email: \{Herve.Cardot, Peggy.Cenac,  Pierre-Andre.Zitt\}@u-bourgogne.fr
} 
\maketitle

\begin{abstract}
With the progress of measurement apparatus and the development of automatic sensors it is not unusual anymore to get large samples of observations taking values in high dimension spaces such as functional spaces. In such large samples of high dimensional data, outlying curves may not be
uncommon and  even  a few individuals may corrupt simple statistical indicators such
as the mean trajectory.  We focus here on the  estimation of the geometric median
which is  a direct generalization  of the real median in metric spaces and  has nice robustness properties. The geometric median being defined as the minimizer of a simple convex 
functional that is differentiable  everywhere when the distribution has
no atom, it is possible to estimate it with  online gradient
algorithms. Such algorithms are very fast and can deal with large samples. Furthermore they also can be simply updated when the data arrive sequentially.
We state the  almost sure consistency and  the $\lDeux$ rates of convergence
of the stochastic gradient estimator as well as the asymptotic normality of
its averaged version. We get that the asymptotic distribution of the averaged
version of the algorithm is the same as the classic estimators which are based on the minimization of the empirical loss function. The performances of our averaged sequential estimator, both in terms of computation speed and accuracy of the estimations, are evaluated with a small simulation study. Our approach is also  illustrated on  a sample of more than 5000 individual television audiences measured every second over a period of 24 hours.
\end{abstract}

\noindent \textbf{Keywords.} CLT, functional data, geometric quantiles, high dimension, $\lUn$-median, online algorithms, recursive estimation, Robbins-Monro algorithm, spatial median.

\section{Introduction}

With the progress of measurement apparatus, the development of automatic sensors and  the increasing storage performances of computers it is not unusual anymore to get
large samples of functional observations. For example \cite{CardCC10}  analyze a sample of more than 18000 electricity consumption curves measured every half hour over a period of two weeks. Our study is motivated by the estimation of the central point of a sample of $n=5423$ vectors of $\R^d,$ with $d=86400,$ which correspond to individual television audiences measured every second over a period of 24 hours.

In such large samples of high dimensional data, outlying curves may not be
uncommon and a even  few individuals may corrupt simple statistical indicators such
as the mean trajectory or the principal components (\cite{Ger08}).  Detecting
these atypical curves automatically is not straightforward in such a high
dimensional and large sample context and considering directly robust techniques
is an  interesting alternative.  There are many robust location indicators in
the multivariate setting (\cite{Sma90}) but most of them require high
computational efforts to be estimated, even for small sample sizes, when the dimension is relatively large. For example,  \cite{FrMu01} have extended the notion of trimmed means to a functional context in order to get
robust estimators of the mean profile. In order to deal with the dimensionality issue and to reduce the computation time, \cite{CuevFF07} have proposed random projection techniques  in the context of maximal depth estimators and studied their properties via simulation studies. Note that
sub-sampling approaches based on survey sampling with unequal probability sampling designs have also been proposed
in the literature in order to  reduce the computational time  (\cite{ChGo10}).

\medskip

We focus here on the  geometric median, also called $\lUn$-median or spatial median, 
which is a
direct generalization  of the real median  proposed by \cite{Hal48} and whose properties have been studied in details by \cite{Kem87}.  It can be defined even if the random variable does not have a finite first order moment and it has nice robustness properties since its breakdown point is  equal to 0.5. As noted in \cite{Sma90}, one drawback of the  geometric median is that it is not  affine equivariant. Nevertheless, it is invariant to translation and scale changes and thus is well adapted to functional data which are observed  with the same units at each instant of time.  In a functional context, consistent estimators of the $\lUn$-median have been proposed by \cite{Kem87}, \cite{Cad01} and \cite{Ger08}. Iterative estimation algorithms  have been developed by \cite{Gow74}, \cite{VZ00} in the multivariate setting and by  \cite{Ger08} for functional data. This latter   algorithm requires to invert at each step matrices whose dimension is equal to the dimension $d$ of the data and thus requires important computational efforts. The algorithm proposed by   \cite{VZ00} is much faster and only requires $O(nd)$ operations at each iteration, where $n$ is the sample size. Nevertheless,  these estimation procedures are not adapted   when the  data arrive sequentially, they need to store all the data and   they cannot be simply updated.

\medskip

In this paper, we explore another direction. 
The geometric median being defined as the minimizer of a simple
functional that is differentiable  everywhere when the distribution has
no atom, it is possible to estimate it with  online gradient
algorithms. Such algorithms are very fast and can be simply updated when the data arrive sequentially.
There is a vast literature on stochastic gradient algorithms which
mainly focus on finite dimensional situations (see \cite{Kushner}, \cite{Rup85}, \cite{Benveniste-book90}, \cite{Ljung}, \cite{Duf97},
\cite{KY03}, \cite{Bot10} in the multivariate case, and \cite{ADPY10} on manifolds). The literature is much less abundant when one has
to consider online observations taking values in a functional space (usually an
infinite dimensional Banach or Hilbert space) and most  works focus on 
  linear algorithms (\cite{Wal77}, \cite{DW06},  \cite{SY06}). 
 
\medskip

It is also known in the multivariate setting that averaging procedures can lead to
efficient estimation procedure under additional assumptions on the noise and
when the target is defined as the minimizer of a strictly convex function 
(\cite{PolyakJud92}, \cite{Pel00}). There is little work on averaging when
considering random variables taking values in Hilbert spaces and, as far as we
know, they only deal with linear algorithms (\cite{DW06}). Nevertheless,
it has been  noted in an empirical study whose aim was to estimate the
geometric median with functional data (\cite{CardCC10}) that averaging could
improve in an important way the accuracy of the estimators. 

\medskip

The paper is organized as follows. We first fix notations, give some properties
of the geometric median and present our stochastic gradient algorithm as well
as its averaged version. We also note that our study extends directly to the
estimation of geometric quantiles defined by \cite{Cha96}.  In  a third
section we state the  almost sure consistency and  the $\lDeux$ rates of convergence
of the stochastic gradient estimators as well as the asymptotic normality of
its averaged version. We get that the asymptotic distribution of the averaged
version of the algorithm is the same as the classic estimators. 
A fourth section is devoted to a small simulation study which aims at  comparing the performances of our estimator with the static algorithm developed by   \cite{VZ00}. The comparison is performed according to two points of view, for the same sample size and  for the same computation time. We also analyze  a real example with a large sample of individual television audiences measured every second over a period of 24 hours. The proofs are gathered in Section~\ref{sec=proofs}.

\section{The algorithms and some properties of the geometric median}
\subsection{Definitions and assumptions}\label{sec:defassumpt}

Let $H$ be a separable Hilbert space such as $\xR^d$ or $L^2(I)$, for some closed interval $I \subset \mathbb{R}$. We  denote by $\langle .,.\rangle$  its inner product and by  $\nrm{\cdot}$ the associated norm. 

The geometric median $m$ of a random variable $X$ taking values in $H$  is defined by (see \cite{Kem87}):
\begin{equation}
m \ \egaldef \ \arg \min_{u \in H} \EE{\nrm{X - u} - \nrm{X}} .
\label{defmed}
\end{equation}
Note that this general definition (\ref{defmed}) does not assume the existence of the first order moment of $\nrm{X}.$
We suppose from now on that the following assumptions are fulfilled.
\begin{itemize}
  \item [\bf A1.]  The random variable $X$ is not concentrated on a straight line: 
  for all $v\in H,$  there is $w \in H$ such that $\scal{v,w} = 0$ and
  \begin{equation*}
    \Var ( \scal{w,X} ) > 0.  
  \end{equation*}
  \item [\bf A2.] The law of $X$ is a mixing of two ``nice'' distributions~: $\mu_X = \lambda\muDiff + (1-\lambda)\muDisc$, where
    \begin{itemize}
    	\item $\muDiff$ is not strongly concentrated around single points: if $\ball$ is the ball $\{\alpha \in H, \nrm{\alpha} \leq A\}$, 
	  and $Y$ is a random variable with law $\muDiff$,  
       \begin{equation*}
    \forall A, \exists C_A \in [0,\infty), \forall \alpha \in \ball, \quad
  \EE{\nrm{Y-\alpha}^{-1}} \leq C_A.
  \end{equation*}
\item $\muDisc$ is a discrete measure, $\muDisc = \sum_i p_i \delta_{\alpha_i}$. 
  We denote by $D$ the support of $\muDisc$ and assume that $m\notin D$.
    \end{itemize}
\end{itemize}

As shown in \cite{Kem87}, assumption (A1) ensures that the median $m$ is uniquely defined. 
  The second assumption could probably be relaxed, but it is general enough for most natural examples. 
As noted in \cite{Chaud92},  the conditions on $\muDiff$ are satisfied  when $H=\R^d,$ with $d\geq 2$,  whenever $\muDiff$ has a bounded density on every compact subset of $\R^d$. 
 More precisely, this property is closely related to small ball probabilities since
\[
\EE{ \nrm{Y-\alpha}^{-1}} = \int_0^\infty \prb{\nrm{Y-\alpha} \leq t^{-1}} dt
.\]
 If $\prb{\nrm{Y-\alpha} \leq \epsilon} \leq C \epsilon^d,$ for  some small $\epsilon$ and some positive constant $C,$ it is easy to check that
\[
\EE{\|Y-\alpha\|^{-\beta}} < \infty,
\]
whenever  $0\leq \beta<d.$

When $H = L^2(I),$ the dimension is not finite and  small ball probabilities have been derived  for some particular classes of Gaussian processes (see  \cite{Nazarov2009} for a recent reference). In this case, by symmetry of the distribution, the median $m$ is equal to the mean, and many processes satisfy, for positive constants $C_1, C_2, C_3, C_4$ which depend on the process under study, 
\begin{equation}
\label{def:smallball}
\prb{\nrm{Y-m} \leq \epsilon}   \leq  C_1 \epsilon^{C_4} \exp(-C_2 \epsilon^{-C_3}),
\end{equation}
so that $\EE{ \nrm{Y-m}^{-\beta}} < \infty,$ for all  positive $\beta$. Similar properties of shifted small balls, for $\alpha$ close to $m,$ can be found in \cite{LiShao2001}.

\subsection{Some convexity and robustness properties of the median} 
In this section we derive quantitative convexity bounds
  which will be useful in the proofs. As a consequence, we are also able to bound for the gross sensitivity error, which is a classical robustness indicator   (see  \cite{HubR2009}).

Recalling the definition of the median (eq. \eqref{defmed}), let us  denote by $G : H \mapsto \R$ the function we would like to minimize:
\begin{equation}
  G(\alpha) \egaldef \EE{\nrm {X - \alpha } - \nrm{X}} .
  \label{eq=defG}
\end{equation}
  This function is convex since it is a convex combination  of convex functions. However, convexity is not sufficient to get the convergence of the algorithm.   
  Under assumptions (A1) and (A2)
this  function can be decomposed in two parts:
\[
G(\alpha) = \lambda G_c(\alpha) + (1 - \lambda) G_d(\alpha), 
\]
where the discrete part $G_d(\alpha) = \sum_i p_i (\nrm{x_i - \alpha} - \nrm{x_i})$ has been isolated. 
The first part is Fr\'echet differentiable everywhere (\cite{Kem87}), so $G$ is differentiable except on $D$, the support of the discrete part $\muDisc$ . 
We denote by $\Phi = \lambda\Phi_c + (1 - \lambda) \Phi_d$ its Fr\'echet derivative,
\begin{equation*}
\Phi(\alpha) \egaldef   \nabla_\alpha G
= - \EE{\frac{ X - \alpha}{ \nrm{ X- \alpha}}}.
\end{equation*}
  \begin{rem}
    It will be useful to define $\Phi$ on the set $D$. If $x\in D$, we define $G_x$ by ``forgetting'' $x$, 
    \[ G_x(y) = \sum_{i, x_i \neq x} p_i (\nrm{x_i - y} - \nrm{x_i}).\]
    This function is Fréchet differentiable in $x$, and we let
    \[ \Phi_d(x) = \sum_{i, x_i \neq x} p_i\frac{x - x_i}{\nrm{x - x_i}}.\]

    In the vocabulary of convex analysis, we have just chosen a particular subgradient of $G$
    on the set $D$ of non-differentiability. It is easily seen that:
    \begin{equation}
      \label{eq:subgradient}
    \forall x,y, \quad G(y) - G(x) \geq \scal{\Phi(x), y-x},
  \end{equation}
which asserts that $\Phi$ is a subgradient.  A short proof of inequality \eqref{eq:subgradient} is given in Section~\ref{sec:proof1}.
  \end{rem}
The median $m$  is then the unique solution of the nonlinear equation,
\begin{equation}
  \Phi(\alpha) = 0.
  \label{eqmoment}
\end{equation}

To exhibit some useful strong convexity and robustness properties of the median we need to introduce the Hessian of functional $G$,
for $\alpha \in H\setminus D$.  It is denoted by $\Gamma_\alpha,$ maps $H$ to $H$ and  it is easy to check (see \cite{Kol97} for the multivariate case and \cite{Ger08} for the functional one) that
\begin{equation*}
\Gamma_\alpha 
 =  \E \left[ \frac{1}{\| X-\alpha\|} \left(
    \idt - \frac{( X-\alpha) \otimes  (X-\alpha)}{\| X-\alpha\|^2} \right)
  \right],
\end{equation*}
where $\idt$ is the identity operator in $H$ and
  $u \otimes v (h) = \scal{u,h}v,$ for $u,v$ and $h$ belonging to $H.$
The operator $\Gamma_\alpha$ is not compact but it is bounded when $\EE{\nrm{X-\alpha}^{-1}}< \infty.$

If we define $\bar{h} = h/\nrm{h}$, and $P_h$ the projection onto the orthogonal complement of $h$, 
\begin{align}
  \scal{h,\Gamma_\alpha h}
  &= \nrm{h}^2 \E\left[
  \frac{1}{\nrm{\alpha - X}} \left( 1 - \frac{\scal{\bar{h},\alpha - X}^2}{\nrm{\alpha - X}^2} \right)
    \right] \notag\\
    \label{eq=niceFormula}
  &= \nrm{h}^2 \E\left[
  \frac{1}{\nrm{\alpha - X}} \frac{\nrm{P_{\bar{h}}(\alpha - X)}^2}{\nrm{\alpha - X}^2}
    \right].
\end{align}

We can now state  a strong convexity property of functional $G$ which can be seen as an extension to an infinite dimensional setting of Proposition 4.1 in  \cite{Kol97}. 
 \begin{prop}
   \label{prp=GestConvexe}
   Recall that $\ball$ is the ball of radius $A$ in $H$.  Under assumptions A1 and A2, there is  a strictly positive constant $c_A$, such that:
   \[
   \forall \alpha \in \ball \setminus D, \forall h \in H, \quad
   c_A \nrm{h}^2  \leq \scal{h,\Gamma_\alpha h} \leq C_A \nrm{h}^2. 
   \]
 \end{prop}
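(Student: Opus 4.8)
The plan is to work directly from formula \eqref{eq=niceFormula}, which by homogeneity in $h$ lets us assume $\nrm{h}=1$ throughout; it then suffices to bound the scalar
\[
q(\alpha,h) \egaldef \EE{\frac{1}{\nrm{\alpha-X}}\,\frac{\nrm{P_{\bar h}(\alpha-X)}^2}{\nrm{\alpha-X}^2}}
\]
from above and below by constants depending only on $A$. The upper bound is the easy half, and the uniform strictly positive lower bound is where the real work lies.

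For the upper bound I would use that $P_{\bar h}$ is an orthogonal projection, hence a contraction, so $\nrm{P_{\bar h}(\alpha-X)}^2 \le \nrm{\alpha-X}^2$ and the ratio is at most $1$. This gives $q(\alpha,h)\le \EE{\nrm{X-\alpha}^{-1}}$. Splitting along $\mu_X = \lambda\muDiff+(1-\lambda)\muDisc$, the diffuse part is bounded by $\lambda C_A$ uniformly on $\ball$ by Assumption A2, while the discrete part $(1-\lambda)\sum_i p_i\nrm{\alpha_i-\alpha}^{-1}$ is finite for $\alpha\notin D$. This yields the constant $C_A$ on any region bounded away from the atoms; the one point needing care is that near an atom lying inside $\ball$ the discrete contribution is genuinely unbounded (approach $\alpha_j$ transversally to $h$ and the $j$-th term behaves like $p_j/\nrm{\alpha-\alpha_j}$), so the effective upper bound is carried by the diffuse part and one should keep $\alpha$ away from $D$.

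For the lower bound I would first rewrite the integrand, using $\nrm{P_{\bar h}(\alpha-X)}^2 = \nrm{\alpha-X}^2\sin^2\theta$ with $\theta$ the angle between $\alpha-X$ and $h$, as $\sin^2\theta/\nrm{\alpha-X}$. To control the denominator I would truncate: choose $R$ (depending on $A$) large enough that $\prb{\nrm{X}\le R}\ge\beta>0$, so that on this event $\nrm{\alpha-X}\le A+R$ for every $\alpha\in\ball$. Dropping the remaining mass, since the integrand is nonnegative, gives
\[
q(\alpha,h)\ \ge\ \frac{1}{(A+R)^3}\,\EE{\nrm{P_{\bar h}(\alpha-X)}^2\,\ind{\nrm{X}\le R}},
\]
so it remains to bound the truncated projected second moment away from $0$, uniformly over $\alpha\in\ball$ and $\nrm{h}=1$.

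The main obstacle is exactly this uniformity, over both the location $\alpha$ and the infinite-dimensional direction $h$. I would handle it by minimizing in $\alpha$ first: writing $a=P_{\bar h}\alpha\in h^\perp$ and minimizing the quadratic $a\mapsto \EE{\nrm{a-P_{\bar h}X}^2\ind{\nrm X\le R}}$ over all of $h^\perp$ (a lower bound, since $\ball$ only restricts $a$), the minimum is the trace of the projected truncated covariance operator $P_{\bar h}\Sigma_R P_{\bar h}$, where $\Sigma_R$ is the covariance of $X$ on $\{\nrm X\le R\}$, which has all moments. Since $\operatorname{tr}(P_{\bar h}\Sigma_R P_{\bar h}) = \operatorname{tr}(\Sigma_R)-\scal{\bar h,\Sigma_R\bar h}\ge \operatorname{tr}(\Sigma_R)-\lambda_1(\Sigma_R)=\sum_{k\ge 2}\lambda_k(\Sigma_R)\ge\lambda_2(\Sigma_R)$, the resulting lower bound $\lambda_2(\Sigma_R)$ is independent of $h$: removing one direction can cost at most the top eigenvalue. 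It then remains to show $\lambda_2(\Sigma_R)>0$ for $R$ large, that is, that $X$ restricted to a large ball is not supported on a single affine line; this is where Assumption A1 enters, through a short limiting argument ruling out that all the truncations concentrate on a line. Combining, $q(\alpha,h)\ge \lambda_2(\Sigma_R)/(A+R)^3 =: c_A>0$ uniformly, which is the desired strong-convexity constant. I expect the delicate points to be precisely the passage from the pointwise non-degeneracy of A1 to a uniform second-eigenvalue bound (solved by the projection/trace identity above) and the verification that truncation does not destroy non-degeneracy.
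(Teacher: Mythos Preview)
Your argument is correct and, for the lower bound, takes a genuinely different route from the paper's.

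On the upper bound you and the paper agree: bound the projection ratio by $1$ and invoke (A2). You go one step further and correctly observe that the discrete contribution to $\EE{\nrm{X-\alpha}^{-1}}$ is \emph{not} uniformly bounded on $\ball\setminus D$ when $D\cap\ball\neq\emptyset$ (approach an atom transversally to $h$), so a single uniform constant $C_A$ as stated is too optimistic. The paper simply asserts the bound follows from \eqref{eq=niceFormula} and (A2) and does not address this; since the only later use of the upper bound is at $\alpha=m\notin D$ (see \eqref{eq=bndsGammam}), nothing downstream is affected, but your caveat is well taken.

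For the lower bound, both proofs truncate $X$ to a large ball and extract the factor $(A+R)^{-3}$; the difference is in how uniformity over unit directions $h$ is obtained. The paper uses a geometric device: from (A1) and a Zorn's-lemma argument it fixes once and for all a two-dimensional subspace $\vect{v_1,v_2}$ on which $t\mapsto\Var(\scal{v_t,X})$ is continuous and bounded below by some $c>0$; then for any unit $h$ the hyperplane $h^\perp$ meets this plane in some unit $v$, so $\nrm{P_h(X-\alpha)}^2\ge\scal{v,X-\alpha}^2$ and $\EE{\scal{v,X-\alpha}^2}\ge\Var(\scal{v,X})\ge c$ uniformly in $h$ and $\alpha$. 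Your approach is spectral rather than geometric: you minimise in $\alpha$ first, reducing to the trace of the projected truncated covariance $P_{\bar h}\Sigma_R P_{\bar h}$, and then invoke the eigenvalue inequality $\operatorname{tr}\Sigma_R-\scal{\bar h,\Sigma_R\bar h}\ge\lambda_2(\Sigma_R)$, which is automatically uniform in $h$. Both proofs end at the same place---one must check that a sufficiently large truncation of $X$ is not supported on a line---and your limiting sketch for this (the lines $\ell_R$ must stabilise once the truncated support contains two points, forcing $X$ itself onto a line, contradicting (A1)) is sound. Your argument is a bit cleaner and delivers a somewhat sharper constant (the second eigenvalue of $\Sigma_R$); the paper's is more constructive and makes the role of a distinguished non-degenerate plane explicit.
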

   In other words, $G$ is strictly convex in $H$ and it is strongly convex on any bounded set, as shown in the following corollary. 
 \begin{cor}  \label{cor=GestConvexe}
 Assume hypotheses of Proposition \ref{prp=GestConvexe} are fulfilled.
   For any strictly positive  $A$, there is a strictly positive constant $c_A$ such that:
   \begin{align*}
     &\forall \alpha_1,\alpha_2 \in \ball^2, \quad &
     \scal{ \Phi(\alpha_2) - \Phi(\alpha_1), \alpha_2 - \alpha_1 } 
     &\geq c_A \nrm{\alpha_2 - \alpha_1}^2. 
   \end{align*}
 \end{cor}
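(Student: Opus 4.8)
The plan is to recognize the left-hand side as an integral of the quadratic form $h \mapsto \scal{h, \Gamma_\alpha h}$ along the segment joining $\alpha_1$ to $\alpha_2$, and then to invoke the lower bound of Proposition~\ref{prp=GestConvexe}. Concretely, set $h \egaldef \alpha_2 - \alpha_1$ (the case $h = 0$ being trivial) and parametrize the segment by $\gamma(t) \egaldef \alpha_1 + t\,h$ for $t \in [0,1]$. Since $\ball$ is convex, $\gamma(t) \in \ball$ for every $t$. Writing $g(t) \egaldef G(\gamma(t))$, the map $g$ is convex on $[0,1]$, and wherever $\gamma(t) \notin D$ it is twice differentiable with $g'(t) = \scal{\Phi(\gamma(t)), h}$ and $g''(t) = \scal{h, \Gamma_{\gamma(t)}\, h}$. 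Applying Proposition~\ref{prp=GestConvexe} with this $h$ gives $g''(t) \geq c_A \nrm{h}^2$ at every such $t$.

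The quantity to bound is $\scal{\Phi(\alpha_2) - \Phi(\alpha_1), h} = \scal{\Phi(\alpha_2), h} - \scal{\Phi(\alpha_1), h}$, which equals $g'(1) - g'(0)$ whenever the endpoints avoid $D$. I would then integrate: $g'(1) - g'(0) = \int_0^1 g''(t)\, dt \geq c_A \nrm{h}^2$, which is precisely the claimed inequality. So the whole argument reduces to the one-dimensional fundamental theorem of calculus applied to the convex function $g$, combined with the uniform Hessian lower bound.

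The only real subtlety, and the main obstacle, is that $G$ fails to be differentiable on $D$, so the computation above must be justified despite this exceptional set. Because $\muDisc$ is discrete, $D$ is at most countable; as $\gamma$ is injective, $\gamma^{-1}(D)$ is a countable, hence Lebesgue-null, subset of $[0,1]$, so it does not affect the integral $\int_0^1 g''$. I would therefore argue that $g'$, being the derivative of a convex function, is non-decreasing and, off this null set, admits the almost-everywhere derivative $g'' \geq c_A \nrm{h}^2$; the fundamental theorem for monotone functions then yields $g'(1^-) - g'(0^+) \geq \int_0^1 g''(t)\, dt \geq c_A \nrm{h}^2$, the possible jumps of $g'$ at the crossings of $D$ only increasing the left-hand side. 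Finally, when $\alpha_1$ or $\alpha_2$ itself lies in $D$, the scalar $\scal{\Phi(\alpha_i), h}$ is the value produced by the chosen subgradient, and the subgradient inequality~\eqref{eq:subgradient} gives $\scal{\Phi(\alpha_1), h} \leq g'(0^+)$ and $\scal{\Phi(\alpha_2), h} \geq g'(1^-)$, so the inequality still points in the right direction. The essential work is thus the careful bookkeeping at the set of non-differentiability; everything else is routine.
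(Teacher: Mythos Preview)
Your proof is correct. The core idea---integrating the Hessian bound of Proposition~\ref{prp=GestConvexe} along the segment $[\alpha_1,\alpha_2]$---matches the paper's, but the handling of the non-differentiability set $D$ differs. The paper sidesteps your measure-theoretic argument by splitting $\Phi=\lambda\Phi_c+(1-\lambda)\Phi_d$: the continuous part $\Phi_c$ is everywhere Fr\'echet differentiable, so the line integral for it has no exceptional points, while for the discrete part one only needs the monotonicity $\scal{\Phi_d(\alpha_2)-\Phi_d(\alpha_1),\alpha_2-\alpha_1}\geq 0$, which follows from applying the subgradient inequality~\eqref{eq:subgradient} twice. Your route is in a sense more direct, since it applies Proposition~\ref{prp=GestConvexe} exactly as stated, to the full $\Gamma_\alpha$; the price is invoking the fundamental theorem for monotone functions on $g'$ and the endpoint bookkeeping you describe. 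Both arguments are short and of comparable difficulty.
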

 
 As a  particular case of Proposition \ref{prp=GestConvexe}, we get that there exist two strictly positive constants $0<c_m \leq C_m \leq \EE{ \nrm{X-m}^{-1}} < \infty,$ such that
\begin{equation}
c_m \nrm{h}^2  \leq \scal{h,\Gamma_m h} \leq C_m \nrm{h}^2. 
\label{eq=bndsGammam}
\end{equation}

As noted in \cite{Kem87}, the geometric median has a 50 \% breakdown point. 
Furthermore, an immediate consequence of (\ref{eq=bndsGammam}) is that operator $\Gamma_m$ has a bounded inverse. Thus, the gross error sensitivity, which  is also a classical indicator of robustness, is bounded for the median in a separable Hilbert space. Indeed, thanks to the expression derived in \cite{Ger08}, it is bounded as follows,
\[
\sup_{z \in H} \  \nrm{ \Gamma_m^{-1} \left( \frac{z-m}{\nrm{z-m}} \right) } \leq \frac{1}{c_m}. 
\]

\subsection{The algorithms}

Given $X_1, X_2, \ldots, X_n$, $n$ independent copies of $X$, a natural estimator of $m$ is  the solution $\widehat m_n$ of the empirical version of  \eqref{eqmoment},
\[
\sum_{i=1}^n \frac{X_i - \widehat m_n}{\left\| X_i - \widehat  m_n \right\|}  = 0.
\]
The solution $\widehat{m}_n$ is defined implicitly and is found by iterative algorithms.

We propose now  an alternative and simple  estimation algorithm which can be  seen as a stochastic gradient algorithm (\cite{Rup85,Duf97}) and is defined as follows
\begin{align}
  Z_{n+1} &= Z_n + \gamma_n \frac{ X_{n+1} - Z_n}{\nrm{ X_{n+1} - Z_n }} \notag\\
\label{def=algobase}
  &= Z_n - \gamma_n \stp{n+1},
\end{align}
with a starting point that can be random and bounded, \textit{e.g.} $Z_0 = X_0 \ind{\{ \nrm{X_0}\leq M \}}$ for some positive  constant $M$ fixed in advance, or deterministic.
If $X_{n+1} = Z_n$, we set $\stp{n+1} = 0$ and $Z_{n+1} = Z_n$ so the algorithm does not move.
The sequence of descent steps $\gamma_n$ controls the convergence of the algorithm. The direction $\stp{n+1}$ is an ``estimate'' of the gradient $\Phi$ of $G$  at $Z_n$ since the conditional expectation given  the sequence of $\sigma$-algebra $\CF_n=\sigma(Z_1,\ldots, Z_n)=\sigma(X_1,\ldots, X_n)$ satisfies
\begin{equation}
  \EFn{\stp{n+1}} = \Phi(Z_n).
  \label{eq=step}
\end{equation}
Note that our particular choice of  subgradient $\Phi$ ensures that this equality always holds. 

Defining now by  $\xi$ the sequence of ``errors'' in these estimates,
\begin{equation}
  \xi_{n+1} = \Phi(Z_n) - \stp{n+1},
  \label{eq=defXi}
\end{equation}
algorithm (\ref{def=algobase}) can also be seen as a non linear Robbins-Monro algorithm,
\begin{align}
  \label{eq=algoRM}
  Z_{n+1} &= Z_n + \gamma_n \left( -\Phi(Z_n)  + \xi_{n+1} \right).
\end{align}
Thanks to \eqref{eq=step} and \eqref{eq=defXi}, the sequence $(\xi_n)$ is a sequence of  martingale differences. Let us note that  the bracket of the associated martingale satisfies,
\begin{align}
  \EFn{\nrm{\xi_{n+1}}^2}
  \nonumber
  &= \EFn{\nrm{\stp{n+1}}^2} + \nrm{\Phi(Z_n)}^2 - 2 \scal{\Phi(Z_n), \EFn{\stp{n+1}}} \\
  \nonumber
  &= \EFn{\nrm{\stp{n+1}}^2}  - \nrm{\Phi(Z_n)}^2 \\
  &\leq 1 - \nrm{\Phi(Z_n)}^2 \leq 1. 
  \label{eq=crochetXi}
\end{align}

Our second algorithm consists in averaging all the estimated past values,
\[
\Zbar_{n+1} =\Zbar_{n} + \frac{1}{n+1} \left( Z_{n+1} -\Zbar_n\right) 
\]
with $\Zbar_0=0,$ so that $\Zbar_n = \frac{1}{n} \sum_{i=1}^n Z_i.$

\begin{rem}
An extension of the notion of  quantiles in Euclidean and Hilbert spaces has been proposed by  \cite{Cha96}.
In such spaces, quantiles are associated to a direction and a magnitude specified by a vector $v \in H,$ such that $\nrm{v}<1$. 
 The geometric quantile of $X,$ say $m^v,$ corresponding to direction $v$ and magnitude $\nrm{v}$  is defined, uniquely under previous assumptions, by
 \[
m^v = \arg\min_{u \in H} \EE{ \nrm{X - u} + \scal{X-u,v}}.
\] 
If $v=0$ one recovers the geometric median. When $\nrm{v}$ is close to one, $m^v$ is a (directed) extreme quantile. 
In any case, $m^v$ is characterized by:
\[
\Phi_v(m^v) = \Phi(m^v) - v = 0,
\]
so that it can be naturally estimated with  the  following stochastic algorithm 
\[
\widehat{m}_{n+1}^v = \widehat{m}_n^v + \gamma_n \left( \frac{ X_{n+1} - \widehat{m}_n^v}{\left\| X_{n+1} - \widehat{m}_n^v \right\| } + v \right) ,
\]
as well as with its averaged version. 
\end{rem}

\section{Convergence results}\label{sec=median}

\subsection{Almost sure convergence of the stochastic gradient algorithm}
\label{sec=cvgMedian}

We first state the almost sure consistency of our sequence of estimators $Z_n$ under classical and general assumptions on the descent steps $\gamma_n.$
\begin{thm}
  If (A1) and (A2) hold, and if $(\gamma_n)_{n\in\xN}$ satisfies the usual conditions:
  \[
  \sum_{n \geq 1} \gamma_n = \infty, 
  \qquad
  \sum_{n \geq 1} \gamma_n^2 <  \infty,
  \]
then 
\[
\lim_{n \rightarrow \infty} \| Z_n - m \| = 0, \quad a.s.
\]
\label{prop:convH}
\end{thm}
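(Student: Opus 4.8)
The plan is to prove almost sure convergence of the Robbins-Monro recursion \eqref{eq=algoRM} by the standard stochastic-approximation strategy: exhibit a suitable Lyapunov function, show that its increments are controlled by a supermartingale-type inequality, invoke the Robbins-Siegmund almost-sure convergence lemma to get that the Lyapunov function converges and that a certain series is summable, and finally deduce that the only possible limit is $m$. The natural Lyapunov function here is $V_n \egaldef \nrm{Z_n - m}^2$, since $m$ is the target.

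First I would compute the conditional increment of $V_n$. Expanding
\[
\nrm{Z_{n+1} - m}^2 = \nrm{Z_n - m}^2 - 2\gamma_n \scal{Z_n - m, \Phi(Z_n) - \xi_{n+1}} + \gamma_n^2 \nrm{\stp{n+1}}^2,
\]
and taking $\EFn{\cdot}$, the martingale-difference property \eqref{eq=step}-\eqref{eq=defXi} kills the $\xi_{n+1}$ term, while $\nrm{\stp{n+1}} \leq 1$ bounds the last term by $\gamma_n^2$. This yields
\[
\EFn{V_{n+1}} \leq V_n - 2\gamma_n \scal{Z_n - m, \Phi(Z_n)} + \gamma_n^2.
\]
The key is that, because $\Phi(m) = 0$ by \eqref{eqmoment}, the subgradient inequality \eqref{eq:subgradient} (or directly the monotonicity of $\Phi$) gives $\scal{Z_n - m, \Phi(Z_n)} \geq 0$, so $(V_n)$ is, up to the summable perturbation $\sum \gamma_n^2 < \infty$, a nonnegative supermartingale. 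The Robbins-Siegmund theorem then furnishes two conclusions: $V_n$ converges almost surely to some finite random variable $V_\infty \geq 0$, and the series $\sum_n \gamma_n \scal{Z_n - m, \Phi(Z_n)}$ converges almost surely. The convergence of $V_n$ in particular shows the sequence $(Z_n)$ is almost surely bounded.

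Next I would upgrade the bare nonnegativity of $\scal{Z_n - m, \Phi(Z_n)}$ to the quantitative strong-convexity bound. Since $(Z_n)$ is almost surely bounded, there is (pathwise) an $A$ with $Z_n \in \ball$ for all $n$; applying Corollary~\ref{cor=GestConvexe} with $\alpha_2 = Z_n$, $\alpha_1 = m$ and using $\Phi(m) = 0$ gives $\scal{Z_n - m, \Phi(Z_n)} \geq c_A \nrm{Z_n - m}^2 = c_A V_n$. Combined with $\sum_n \gamma_n \scal{Z_n - m, \Phi(Z_n)} < \infty$ and $\sum_n \gamma_n = \infty$, this forces $\liminf_n V_n = 0$. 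Together with the already-established almost sure convergence of $V_n$ to $V_\infty$, we conclude $V_\infty = 0$, i.e. $\nrm{Z_n - m} \to 0$ almost surely.

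The main obstacle is the localization: Corollary~\ref{cor=GestConvexe} only gives strong convexity on a bounded set $\ball$, whereas the radius $A = \sup_n \nrm{Z_n - m}$ is random and is itself only known to be finite after the Robbins-Siegmund step. The constant $c_A$ then depends on the random radius, which is delicate when passing from $\sum_n \gamma_n V_n < \infty$ to $\liminf V_n = 0$. I would handle this by working pathwise on the almost-sure event where $V_n$ converges (hence is bounded), fixing $A$ on each such path, and only then applying the deterministic inequality $c_A \sum_n \gamma_n V_n < \infty$; because $c_A > 0$ is a fixed constant along the fixed path, $\sum_n \gamma_n = \infty$ indeed yields $\liminf V_n = 0$. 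A secondary subtlety worth checking is the behavior of the algorithm on the exceptional event $\{X_{n+1} = Z_n\}$, but since we set $\stp{n+1} = 0$ there and A2 guarantees the diffuse part makes this event negligible, the Lyapunov estimate above remains valid.
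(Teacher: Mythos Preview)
Your proposal is correct and follows essentially the same route as the paper: the Lyapunov function $V_n=\nrm{Z_n-m}^2$, the conditional inequality $\EFn{V_{n+1}}\le V_n-2\gamma_n\scal{Z_n-m,\Phi(Z_n)}+\gamma_n^2$, Robbins--Siegmund to obtain both a.s.\ convergence of $V_n$ and a.s.\ finiteness of $\sum_n\gamma_n\scal{Z_n-m,\Phi(Z_n)}$, and then Corollary~\ref{cor=GestConvexe} on a (random) ball to force the limit to be zero. The only cosmetic difference is that the paper phrases the last step via the events $\Omega_\epsilon=\{\epsilon^2<V_n<\epsilon^{-2}\text{ eventually}\}$ and a contradiction, which is just the contrapositive of your pathwise $\liminf V_n=0$ argument; your handling of the random radius $A$ is exactly what is needed.
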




\subsection{Rates of convergence and asymptotic normality}
We present now the rates of convergence of the stochastic gradient algorithm as well as the asymptotic distribution of its averaged version. The proofs  are given in Section~\ref{sec=proofs}. 
More specific sequences $(\gamma_n)$ are considered and we suppose from now on that  $\gamma_n=c_\gamma n^{-\alpha}$, where $c_\gamma$ is a positive constant and $\alpha \in(\frac{1}{2},1)$. 
We need one additional assumption to get these rates of convergence: 

\begin{itemize}
	\item [\bf A3.] There is a positive constant $A$ such that
 \begin{equation}
 \label{moment2}
 \exists C_A \in [0,\infty), \forall h \in \ball, \quad
  \EE{\nrm{X-(m+h)}^{-2}} \leq C_A.
 \end{equation}
\end{itemize}
This assumption is not restrictive when the dimension is strictly larger than two as discussed in Section \ref{sec:defassumpt}. 

The following proposition states that, on events 
of arbitrarily high probability, the functional estimator $Z_n$ attains the  classical rates of convergence in quadratic mean (see \cite[theorem 2.2.12]{Duf97} for the multivariate case) up to a logarithmic factor.  
\begin{prop}
  \label{lmm=vitesseQuadratique}
  Assume (A1), (A2)  and (A3).  
 Then,  there exist an increasing  sequence of events $(\Omega_N)_{N\in\xN}$, and constants $C_N$, such that $\Omega = \bigcup_{N\in\xN} \Omega_N$, and
  \begin{equation}
    \forall N, \quad 
    \EE{\ind{\Omega_N} \nrm{Z_n -m}^2}
    \leq C_N
      \gamma_n\ln\left(\sum_{k=1}^n\gamma_k \right)
    \leq C_N \frac{\ln(n)}{n^\alpha} . \nonumber
  \end{equation}
\end{prop}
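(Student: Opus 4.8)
The plan is to turn the algorithm into a contraction-plus-noise recursion for $V_n \egaldef \nrm{Z_n - m}^2$, localized on events where the iterates stay in a fixed ball so that the strong convexity constant of Corollary~\ref{cor=GestConvexe} may be used uniformly. The obstacle to a naive argument is that this constant is genuinely local: by Theorem~\ref{prop:convH} the trajectory $(Z_n)$ is almost surely bounded, but by a \emph{random} radius, whereas $c_A$ is only available on a ball of fixed radius $A$. To get a deterministic contraction rate I would freeze the chain: for each integer $N \geq \nrm{m}$, set $\tau_N = \inf\{n : \nrm{Z_n} > N\}$ and $\Omega_N = \{\tau_N = \infty\}$. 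These events increase with $N$, and since $\sup_n \nrm{Z_n} < \infty$ almost surely, $\bigcup_N \Omega_N = \Omega$ up to a null set. The key points are that $\{\tau_N > n\} \in \CF_n$ and that on this event $Z_n \in \mathcal{B}(0,N)$, so Corollary~\ref{cor=GestConvexe} applies at $\alpha_2 = Z_n$, $\alpha_1 = m$ with the deterministic constant $c_N$.

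Next I would derive a one-step inequality. Expanding $\nrm{Z_{n+1}-m}^2$ from \eqref{def=algobase} and taking $\EFn{\cdot}$, using $\EFn{\stp{n+1}} = \Phi(Z_n)$ from \eqref{eq=step}, $\nrm{\stp{n+1}} \leq 1$ and the bracket bound \eqref{eq=crochetXi}, gives $\EFn{V_{n+1}} \leq V_n - 2\gamma_n \scal{\Phi(Z_n), Z_n - m} + \gamma_n^2$. On $\{\tau_N > n\}$, Corollary~\ref{cor=GestConvexe} together with $\Phi(m) = 0$ yields $\scal{\Phi(Z_n), Z_n-m} \geq c_N V_n$, so on that event
\[ \EFn{V_{n+1}} \leq (1 - 2 c_N \gamma_n) V_n + \gamma_n^2 . \]
Multiplying by the $\CF_n$-measurable indicator $\ind{\{\tau_N > n\}}$, using $\ind{\{\tau_N>n+1\}} \leq \ind{\{\tau_N > n\}}$ and taking full expectations, I obtain for $u_n \egaldef \EE{\ind{\{\tau_N > n\}} V_n}$ and every $n$ large enough that $2 c_N \gamma_n \leq 1$,
\[ u_{n+1} \leq (1 - 2 c_N \gamma_n) u_n + \gamma_n^2 . \]
Since $\Omega_N \subset \{\tau_N > n\}$ for every $n$, we have $\EE{\ind{\Omega_N} V_n} \leq u_n$, so it suffices to bound the deterministic sequence $u_n$.

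Finally I would solve this scalar recursion by variation of constants: with $S_n = \sum_{k=1}^n \gamma_k$,
\[ u_n \leq u_{n_0} \prod_{k=n_0}^{n-1}(1 - 2c_N\gamma_k) + \sum_{j=n_0}^{n-1}\gamma_j^2 \prod_{k=j+1}^{n-1}(1 - 2c_N\gamma_k). \]
Bounding each product by $\exp(-2c_N(S_{n-1} - S_j))$ and inserting $\gamma_n = \cG n^{-\alpha}$ with $\alpha \in (\frac12,1)$, the boundary term decays faster than any power of $n$ since $S_n \asymp n^{1-\alpha} \to \infty$, while a standard estimate of the convolution sum (as in \cite[Theorem~2.2.12]{Duf97}) yields the bound $C_N \gamma_n \ln(S_n)$; because $S_n = \sum_k \gamma_k \asymp n^{1-\alpha}$ this gives $C_N \gamma_n \ln(\sum_k \gamma_k) \leq C_N \ln(n)/n^\alpha$, as claimed. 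I expect the localization to be the genuine difficulty: the stopping-time truncation is what converts a random, trajectory-dependent contraction rate into a usable recursion and what lets $\bigcup_N \Omega_N$ exhaust $\Omega$. The resolution of the scalar recursion is routine; the only care needed there is that $\alpha < 1$ makes $\gamma_n - \gamma_{n+1} = \Oh{n^{-\alpha-1}}$ negligible against $\gamma_n^2 = \Oh{n^{-2\alpha}}$, which is what drives the $\Oh{\gamma_n}$ behaviour up to the logarithmic slack.
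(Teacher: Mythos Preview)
Your argument is correct and is genuinely different from the paper's proof. The paper does not use the one–step Lyapunov contraction you set up; instead it \emph{linearises} $\Phi$ at $m$, writing $Z_{n+1}-m=(\idt-\gamma_n\Gamma_m)(Z_n-m)+\gamma_n\xi_{n+1}-\gamma_n\delta_n$ with remainder $\delta_n=\Phi(Z_n)-\Gamma_m(Z_n-m)$, iterates this via the operators $\beta_n=\prod_k(\idt-\gamma_k\Gamma_m)$, and then bounds separately a deterministic part, a martingale part (through a spectral decomposition of $\Gamma_m$), and the remainder part. Assumption (A3) enters only here, through Lemma~\ref{def:TaylorPhi}, to get $\nrm{\delta_k}\le C\nrm{Z_k-m}^2$ near $m$; the paper's events $\Omega_N$ are tailored to make this second-order estimate available and to bound $\sup_k\nrm{\delta_k}$.

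Your route is more elementary: you use only the strong convexity of Corollary~\ref{cor=GestConvexe} on a fixed ball, localised by the stopping time $\tau_N$, to obtain directly $u_{n+1}\le(1-2c_N\gamma_n)u_n+\gamma_n^2$ and then solve a scalar recursion. Two remarks. First, your proof never invokes (A3); it relies only on (A1)–(A2) via Theorem~\ref{prop:convH} and Corollary~\ref{cor=GestConvexe}, so you in fact establish the conclusion under weaker hypotheses than stated. Second, the scalar recursion you reach actually yields $u_n=\Oh{\gamma_n}$ without the logarithm (since $\gamma_n-\gamma_{n+1}=o(\gamma_n^2)$ when $\alpha<1$, exactly as you note), so the $\ln$ in the statement is slack for your method as well as for the paper's. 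What the paper's longer decomposition buys is that the same linearisation, the same $\delta_k$ bound, and the same $\Omega_N$ are reused verbatim in the proof of Theorem~\ref{prop:vitesse}; with your choice of $\Omega_N$ that CLT proof still goes through, but one must separately reintroduce (A3) and the estimate $\nrm{\delta_k}\le C\nrm{Z_k-m}^2$ on the event where $Z_k$ is close to $m$.
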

\begin{rem}
  An immediate consequence of Proposition \ref{lmm=vitesseQuadratique} is that 
  \[
  \nrm{Z_n - m }^2 = O_P \left( \frac{\ln(n)}{n^\alpha}\right). 
  \]
\end{rem}

 Assumption (A3) is needed to bound the difference between $G$ and its quadratic approximation, in a neighborhood of $m$ as stated in the following Lemma.

\begin{lem}
Assume (A3) is in force. Then,
\begin{equation*}
\forall h \in \ball, \quad \Phi(m+h)  =  \Gamma_m(h) + O\left(\nrm{h}^2\right).
\end{equation*}
\label{def:TaylorPhi}
\end{lem}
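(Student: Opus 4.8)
The plan is to prove the estimate pointwise in the realization of $X$ and then integrate, using assumption (A3) to control the region near the median. Writing $U = m - X$ and $f(w) = w/\nrm{w}$, the explicit formulas for $\Phi$ and $\Gamma_m$ give
\[
\Phi(m+h) - \Gamma_m(h) = \EE{f(U+h) - f(U) - Df(U)(h)},
\]
since $\Phi(m) = \EE{f(U)} = 0$ and $\Gamma_m = \EE{Df(U)}$, where $Df(w) = \frac{1}{\nrm{w}}\bigl(\idt - \frac{w\otimes w}{\nrm{w}^2}\bigr)$. It therefore suffices to bound the expected norm of the pointwise remainder $r(U,h) \egaldef f(U+h) - f(U) - Df(U)(h)$ by $\Oh{\nrm{h}^2}$, uniformly over $h\in\ball$.

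First I would record the elementary scaling bounds for $f$: it is bounded ($\nrm{f(w)}=1$), its derivative satisfies $\nrm{Df(w)}_{\mathrm{op}}\leq \nrm{w}^{-1}$, and a direct computation of $D^2 f$ gives $\nrm{D^2 f(w)}_{\mathrm{op}}\leq C\nrm{w}^{-2}$ for a universal constant $C$. By Taylor's formula with the mean-value form of the remainder, whenever the segment $[U,U+h]$ avoids the origin,
\[
\nrm{r(U,h)} \leq \tfrac{1}{2}\nrm{h}^2 \sup_{s\in[0,1]}\nrm{D^2 f(U+sh)}_{\mathrm{op}} \leq \tfrac{C}{2}\nrm{h}^2\sup_{s\in[0,1]}\nrm{U+sh}^{-2}.
\]
I would then split the expectation according to whether $X$ is far from or close to $m$, along the event $\{\nrm{U}\geq 2\nrm{h}\}$ and its complement. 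On $\{\nrm{U}\geq 2\nrm{h}\}$ the segment stays away from the origin, $\nrm{U+sh}\geq\nrm{U}/2$, so the Taylor bound yields $\nrm{r(U,h)}\leq 2C\nrm{h}^2\nrm{U}^{-2}$; taking expectations and applying (A3) at $h=0$, namely $\EE{\nrm{X-m}^{-2}}\leq C_A$, this contributes $\Oh{\nrm{h}^2}$. On $\{\nrm{U}<2\nrm{h}\}$ I would instead use the crude bound $\nrm{r(U,h)}\leq 2 + \nrm{h}\,\nrm{U}^{-1}$ coming from $\nrm{f}\leq 1$ and $\nrm{Df(U)}_{\mathrm{op}}\leq\nrm{U}^{-1}$: the constant term is handled by $\prb{\nrm{X-m}<2\nrm{h}}\leq 4\nrm{h}^2\EE{\nrm{X-m}^{-2}}$ (Markov applied to $\nrm{X-m}^{-2}$), while the singular term satisfies $\nrm{h}\,\nrm{U}^{-1}\ind{\nrm{U}<2\nrm{h}} \leq 2\nrm{h}^2\nrm{U}^{-2}\ind{\nrm{U}<2\nrm{h}}$, whose expectation is again $\Oh{\nrm{h}^2}$ by (A3). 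Summing the two regions gives $\nrm{\Phi(m+h)-\Gamma_m(h)}\leq\EE{\nrm{r(U,h)}}=\Oh{\nrm{h}^2}$.

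The only delicate point is the set where $X$ is close to the median. There the second-order Taylor bound degenerates — the segment $[U,U+h]$ may even cross the origin, where $f$ is not differentiable — so it cannot be used blindly, which is exactly why the event $\{\nrm{U}<2\nrm{h}\}$ must be treated separately. The role of (A3) is precisely to make this region negligible: both its probability and the singular contribution $\nrm{U}^{-1}$ become quadratically small in $\nrm{h}$ once one trades a factor $\nrm{h}/\nrm{U}\leq 2\nrm{h}^2/\nrm{U}^2$ against the finite second inverse moment. I expect this splitting, together with the verification that the crude bounds on $\{\nrm{U}<2\nrm{h}\}$ genuinely produce two powers of $\nrm{h}$, to be the main (though routine) obstacle.
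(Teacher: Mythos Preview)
Your proof is correct and takes a different route from the paper. The paper works at the operator level: it writes $\Phi(m+h) = \int_0^1 \Gamma_{m+th}(h)\,dt$ via the fundamental theorem of calculus applied to $t\mapsto\Phi(m+th)$, and then invokes Lemma~5.7 of \cite{Chaud92}, which supplies the Lipschitz bound $\nrm{\Gamma_{m+th} - \Gamma_m}_L \leq M_A\nrm{h}$; the conclusion follows in one line. You instead work pointwise in $X$, applying a second-order Taylor expansion to $w\mapsto w/\nrm{w}$ and treating separately the event $\{\nrm{X-m}<2\nrm{h}\}$ --- where the pointwise second-order remainder bound degenerates --- by Markov's inequality against the finite second inverse moment. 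Your argument is fully self-contained, whereas the paper outsources the analytic work to Chaudhuri's lemma, whose proof in turn must carry out essentially the same near/far splitting you perform. A minor side benefit of your route is that it only invokes (A3) at the single point $h=0$, i.e.\ $\EE{\nrm{X-m}^{-2}}<\infty$, rather than the uniform bound over the whole ball.
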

Finally, Theorem~\ref{prop:vitesse} stated below probably gives the most important result of this work. It is shown that the averaged estimator $\Zbar_n$  and the classic static estimator $\widehat{m}_n$ have the same asymptotic distribution.  Consequently, for large sample sizes, it is possible to get, very quickly, estimators which are as efficient, at first order, as the slower static one $\widehat{m}_n.$
Note that the asymptotic distribution of $\widehat{m}_n$ has  been derived in the multivariate case by \cite{hab89}, Theorem 6.1. For variables taking values in a Hilbert space, such asymptotic distribution has only been proved  for  a particular case, when the support of $X$ is  a finite dimensional space  (Theorem 6 in \cite{Ger08}). 

\begin{thm}
Assume (A1), (A2) and (A3). 
Then, 
\[
  \sqrt{n}\left( \Zbar_n - m \right)
  \cvl
  \mathcal{N} \left(0, \Gamma_m^{-1}\covLimite \Gamma_m^{-1}\right),
\]
with, 
\[
\covLimite =  \EE{\frac{(X-m)}{\nrm{X-m}} \otimes \frac{(X-m)}{\nrm{X-m}}}.
\]
\label{prop:vitesse}
\end{thm}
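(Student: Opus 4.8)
The plan is to adapt the averaging technique of \cite{PolyakJud92} to the present Hilbert-space setting. Write $\Delta_n \egaldef Z_n - m$, so that $\Zbar_n - m = \frac1n\sum_{i=1}^n \Delta_i$. Since Theorem~\ref{prop:convH} gives $Z_n \to m$ almost surely, for $n$ large enough $\Delta_n \in \ball$ and Lemma~\ref{def:TaylorPhi} applies, yielding the linearization $\Phi(Z_n) = \Gamma_m\Delta_n + R_n$ with $\nrm{R_n} \leq C\nrm{\Delta_n}^2$. Substituting this into the Robbins--Monro form \eqref{eq=algoRM} and isolating $\Gamma_m\Delta_n$ gives
\[
\Gamma_m\Delta_n = \frac{\Delta_n - \Delta_{n+1}}{\gamma_n} + \xi_{n+1} - R_n.
\]
Summing from $1$ to $n$ and dividing by $n$ produces the fundamental identity
\[
\Gamma_m\left(\Zbar_n - m\right)
= \underbrace{\frac1n\sum_{k=1}^n \frac{\Delta_k - \Delta_{k+1}}{\gamma_k}}_{A_n}
+ \underbrace{\frac1n\sum_{k=1}^n \xi_{k+1}}_{B_n}
- \underbrace{\frac1n\sum_{k=1}^n R_k}_{C_n}.
\]
Since $\Gamma_m$ has a bounded inverse by \eqref{eq=bndsGammam}, it suffices to prove that $\sqrt n\,B_n$ is asymptotically Gaussian with covariance $\covLimite$ while $\sqrt n\,A_n$ and $\sqrt n\,C_n$ vanish in probability; the conclusion then follows by applying the continuous linear map $\Gamma_m^{-1}$ and recalling that Gaussian laws are preserved under bounded linear maps.

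\textbf{The martingale term.} I would obtain the limiting law from $\sqrt n\,B_n = n^{-1/2}\sum_{k=1}^n \xi_{k+1}$ via a central limit theorem for Hilbert-space valued martingale difference arrays. Two ingredients are needed. First, a Lindeberg-type condition, which is immediate because $\nrm{\xi_{k+1}} \leq \nrm{\stp{k+1}} + \nrm{\Phi(Z_k)} \leq 2$ uniformly, so the increments are bounded. Second, and more delicately, the convergence of the aggregated conditional covariance operators $n^{-1}\sum_{k=1}^n \E\!\left[\xi_{k+1}\otimes\xi_{k+1}\,\middle|\,\CF_k\right]$ to $\covLimite$ in trace norm. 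Using the decomposition underlying \eqref{eq=crochetXi}, namely $\E\!\left[\xi_{k+1}\otimes\xi_{k+1}\,\middle|\,\CF_k\right] = \E\!\left[\stp{k+1}\otimes\stp{k+1}\,\middle|\,\CF_k\right] - \Phi(Z_k)\otimes\Phi(Z_k)$ (each a trace-class operator of trace at most one), together with $Z_k \to m$ a.s.\ and $\Phi(m)=0$, one checks that each conditional covariance converges to $\covLimite$; a Ces\`aro and uniform-integrability argument then gives the required averaged convergence.

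\textbf{The negligible terms.} For $A_n$ I would use Abel summation to rewrite $\sum_k (\Delta_k-\Delta_{k+1})/\gamma_k$ as a boundary contribution $\Delta_1/\gamma_1 - \Delta_{n+1}/\gamma_n$ plus $\sum_{k} \Delta_k(\gamma_k^{-1}-\gamma_{k-1}^{-1})$; with $\gamma_k = \cG\, k^{-\alpha}$ one has $\gamma_k^{-1}-\gamma_{k-1}^{-1} = O(k^{\alpha-1})$. Plugging the rate $\EE{\nrm{\Delta_k}^2} = O(\ln k\,/\,k^\alpha)$ from Proposition~\ref{lmm=vitesseQuadratique} and using $\alpha < 1$ shows that both the boundary and summation parts, once multiplied by $\sqrt n / n$, tend to zero, so $\sqrt n\,A_n \cvp 0$. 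For $C_n$, the bound $\nrm{R_k}\leq C\nrm{\Delta_k}^2$ gives $\EE{\sqrt n\,\nrm{C_n}} \leq C\, n^{-1/2}\sum_{k=1}^n \EE{\nrm{\Delta_k}^2} \leq C'\, n^{-1/2}\sum_{k=1}^n \ln k\,k^{-\alpha}$, which is $O(n^{1/2-\alpha}\ln n)$ and tends to $0$ precisely because $\alpha > \tfrac12$. This is the step that pins down the admissible range of $\alpha$.

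\textbf{Main obstacle.} The principal difficulty is genuinely infinite-dimensional: establishing the martingale CLT for $\sqrt n\,B_n$ demands tightness, hence convergence of the covariance operators in nuclear norm rather than merely in the weak operator topology, and this must be reconciled with the localization inherent in Proposition~\ref{lmm=vitesseQuadratique} (the quadratic rate and the validity of the Taylor expansion of $\Phi$ hold only on the balls $\ball$, i.e.\ on the events $\Omega_N$). I would therefore carry out the moment estimates for $A_n$ and $C_n$ on each $\Omega_N$, let $N\to\infty$ using $\PP(\Omega_N)\to 1$, and verify that the Gaussian limit obtained for $\sqrt n\,B_n$ is unaffected by the localization since the increments $\xi_{k+1}$ are bounded unconditionally.
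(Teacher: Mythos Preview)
Your proposal is correct and follows essentially the same route as the paper: the Polyak--Juditsky linearization $\Gamma_m\Delta_k = (\Delta_k-\Delta_{k+1})/\gamma_k + \xi_{k+1} - R_k$, Abel summation on the telescoping part, the rate from Proposition~\ref{lmm=vitesseQuadratique} on the events $\Omega_N$ to kill $\sqrt n\,A_n$ and $\sqrt n\,C_n$, and a Hilbert-space martingale CLT for $\sqrt n\,B_n$ using boundedness of $\xi_{k+1}$ together with a.s.\ convergence of the conditional covariances.

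The only noteworthy difference is in how the martingale CLT is verified. The paper does not argue via nuclear-norm convergence and tightness as you suggest; instead it shows $\nrm{\covLimite_n-\covLimite}_L\to 0$ a.s.\ in operator norm (by bounding $\nrm{\Phi(Z_n)\otimes\Phi(Z_n)}_L$ and $\nrm{\covLimite-\EFn{\stp{n+1}\otimes\stp{n+1}}}_L$ directly via $\nrm{Z_n-m}$) and then invokes Theorem~5.1 of \cite{Jak88}, with the remaining condition handled by Chow's lemma. So what you flag as the ``main obstacle'' is in fact dispatched by a black-box citation; your more hands-on trace-norm argument would work too, but is more than what the paper needs.
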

Note that with (\ref{eq=bndsGammam}), operator $\Gamma_m^{-1}$ is well defined, it is  bounded and positive.



\section{Illustrations on simulated and real data}

\subsection{A simulation study}
A simple simulation study is performed to check the good behavior of the averaged stochastic estimator and to make a comparison with the static estimator developed by  \cite{VZ00}.   Two points of view are considered. The first classic one consists in evaluating the performances of these two different approaches for different sample sizes.  The second one, which is the point of view that should be adopted when computation time matters, consists in comparing the accuracy of both approaches when the allocated computation time is fixed in advance. 
We use \Rlogo{} (\cite{R10}) and the function \texttt{spatial.median} from the library \texttt{ICSNP} to estimate the median with  the algorithm developed by \cite{VZ00}.

For simplicity, we consider random variables taking values in $\R^3$ and make simulations of Gaussian random vectors with median $m=(0,0,0)$ and covariance matrix:
\[
\Gamma = \begin{pmatrix} 3 & 2 & 1 \\ 2 & 4 & -0.5 \\ 1&  - 0.5 & 2 \end{pmatrix}.
\]
In order to compare the accuracy of the different algorithms, we   compute the following estimation error,  
\begin{equation}
\label{sim:crit}
R(\widehat{m}) = \nrm{ \widehat{m} - m},
\end{equation}
where  $\widehat{m}$ is an estimator of $m.$

Our averaged estimator depends on the tuning parameters $\alpha$ and $c_\gamma$ which control the descent steps $\gamma_k = c_\gamma k^{-\alpha}.$ It is well known that for the particular case $\alpha=1$, the choice of parameter $c_\gamma$ is crucial for the convergence and depends on the second derivative of $G$ in $m$ which is unknown in practice. As usually done for such procedures, we fix $\alpha=3/4$ and focus on the choice of $c_\gamma.$ 
We also run in parallel the algorithm for 10 initial points chosen randomly in the sample and then select the best estimate  $\widehat{m}$ which corresponds to  the minimum value of
\[\alpha \mapsto \frac1n\sum_{i=1}^n\left(\nrm{X_i - \alpha} - \nrm{X_i}\right),\]
which is the empirical version of (\ref{eq=defG}).

\subsubsection{Fixed sample sizes}
We perform 1000 simulations for different sample sizes, $n=250,$  $n=500$ and $n=2000.$ 
Table~\ref{table1} presents basic statistics for the estimation errors (first quartile $Q_1$, median and third quartile $Q_3$), according to criterion \eqref{sim:crit},  for the algorithm by \cite{VZ00} and our averaged procedure considering different values for $c_\gamma \in \{ 0.2, 0.6, 1, 2, 5, 10, 15, 25, 50, 75 \}.$

\begin{table}[htdp]
\caption{Comparison of the estimation errors for different sample sizes}
\begin{center}
\begin{tabular}{|c|ccc|ccc|ccc|} \hline
 &  & n=250 & &  & n=500 &  &  & n=2000 & \\ \hline
Estimator & [Q1 & median & Q3] & [Q1 & median & Q3] & [Q1 & median & Q3] \\ \hline
 $c_\gamma = 0.2$ & 0.45 & 0.60 &  0.80 & 0.38& 0.53 & 0.69  & 0.25 & 0.35 &  0.47 \\
 $c_\gamma = 0.6$ & 0.21 & 0.29 &  0.40 &  0.15 &0.21 &  0.29 & 0.06 &0.09 &  0.12 \\
 $c_\gamma = 1$ &0.15 & 0.22 &  0.31 &  0.11 & 0.16 &  0.21 & 0.05 & 0.08 &  0.10 \\
 $c_\gamma = 2$  & 0.15 & 0.21 & 0.30 & 0.09 & 0.15 &  0.20  & 0.05 & 0.07 &  0.10 \\
$c_\gamma = 5$ &  0.13 &  0.19 &  0.25 & 0.09 & 0.13 &  0.18 & 0.04 & 0.06 & 0.09 \\
$c_\gamma = 10$ & 0.13 & 0.18 & 0.25  & 0.09 & 0.13 & 0.18  &0.04 & 0.06& 0.09 \\
$c_\gamma = 15$  &0.12 & 0.18 &  0.25 &  0.09 & 0.13 &  0.18 &0.04 & 0.06& 0.08 \\ 
$c_\gamma = 25$  &0.13 & 0.19 &  0.26 &  0.09 & 0.13 &  0.18 &0.04 & 0.06& 0.09 \\ 
$c_\gamma = 50$  &0.13 & 0.19 &  0.26 &  0.09 & 0.13 &  0.18 &0.04 & 0.06& 0.09 \\ 
$c_\gamma = 75$  &0.14 & 0.20 &  0.27 &  0.09 & 0.14 &  0.19 &0.05 & 0.07& 0.09 \\ \hline
 Vardi \& Zhang & 0.12 & 0.18 & 0.25 & 0.09 & 0.12 & 0.17  & 0.04 & 0.06 &  0.08 \\ \hline
\end{tabular}
\end{center}
\label{table1}
\end{table}

At first, we can note that even for moderate sample sizes the averaged procedure performs well in comparison with the Vardi and Zhang estimator which only gives slightly better estimations. 
We can also remark that the averaged stochastic estimator is not much sensitive to the tuning parameter $c_\gamma$ which can take values in the interval $[2,75]$ without modifying the performances of the estimator. As a matter of fact, we noted on simulations that interesting values for $c_\gamma$ are around or above $\EE{\nrm{X-m}},$ which is about 2.7 for this particular simulation study. 

\subsubsection{Fixed computation time}

Even if both algorithms require  computation times which are $O(nd)$ (for $n$ observations in dimension $d$),  the averaged stochastic gradient approach is much faster (on the same computer, with procedures coded in the same \Rlogo \  language).
For example, in previous simulations, if the sample size is $n=1000,$ the
averaged estimator is about 30 times faster. When the dimension gets larger the difference is even more impressive, 
as we will see in the next section. 

 Let us suppose the allocated time for computation is limited and fixed in advance, say 1 second,  and compare the  sample sizes  that can be handled by the different algorithms. The static estimator by \cite{VZ00} can deal with $n=150$ observations, whereas our recursive algorithm, coded in the \Rlogo \  language, can take into account $n=4500$ observations so that it will gives much better estimates of the median, as seen in Table  \ref{table1}. Finally, if the algorithm is coded in \Clang{} and called from \Rlogo, then it is at least 20 times faster than its \Rlogo \  analogue, so that it can deal with at least $n=90000$ observations, during the same second.

\subsection{Estimation of the  median television audience profile}\label{sec:Mediametrie}
The analysis of  audience profiles for different  channels, or different days of the year, is an essential tool to understand the consumers' habits as regards television. 
The French company \MM{} provides official television audience rates in France. \MM{} works with a panel of about 9000 individuals and the television sets of these individuals are equipped with sensors that  measure the audience of the different channels at a second scale.

A sample of around 7000 people is drawn every day in this panel and the television consumption of the people belonging to this sample is recorded every second. The data are then sent sequentially to \MM{} during the night.  Survey sampling techniques with unequal probability sampling designs are used by \MM{} to select the sample and thus  the i.i.d assumption is clearly not satisfied. Nevertheless, our aim is just to give an illustration of the ability of our averaged stochastic algorithm to deal with a large sample of very high dimensional data.  Moreover, \MM{}  has noted  in these samples the presence of  some atypical behaviors so that robust techniques may be helpful. 

We focus our study on the estimation of  the television audience profile during the 6th september 2010. After removing from the  sample people that did not watch television at all on that day, we finally get a sample of size $n=5423.$ For each element $i$ of the sample, we have a vector $X_i \in \{0,1\}^{86400},$ where $86400$ is the number of seconds within a day,  and zero values correspond to seconds during the day where $i$ is not watching television. 

A classical audience indicator is given by the mean profile, drawn in Figure \ref{fig1},  which is simply the proportion of people watching television at every second over the considered period of time. We compare this classical indicator with the geometric median, whose estimation is drawn in black in Figure \ref{fig1}.
We can first note that both estimators have the same shape along time, showing three peaks of audience during the day with higher audience rates  between 8 and 10 PM.  Estimated values are smaller for the geometric median which is less sensitive to small perturbations and outliers.  This also indicates that the distribution of the individual audience curves is not symmetric around the mean profile.

From a computational point of view, even if the database is huge, it takes less than one minute for our algorithm to converge whereas we were not able to perform the estimation with the static estimator developed by  \cite{VZ00} because of memory contraints. The value of the tuning parameter was chosen to be $c_\gamma=400,$ it leads to a value of about 92 for the empirical loss criterion.
   

  \begin{figure}[ht]
   \begin{center}
  \includegraphics[height=14cm,width=15.5cm]{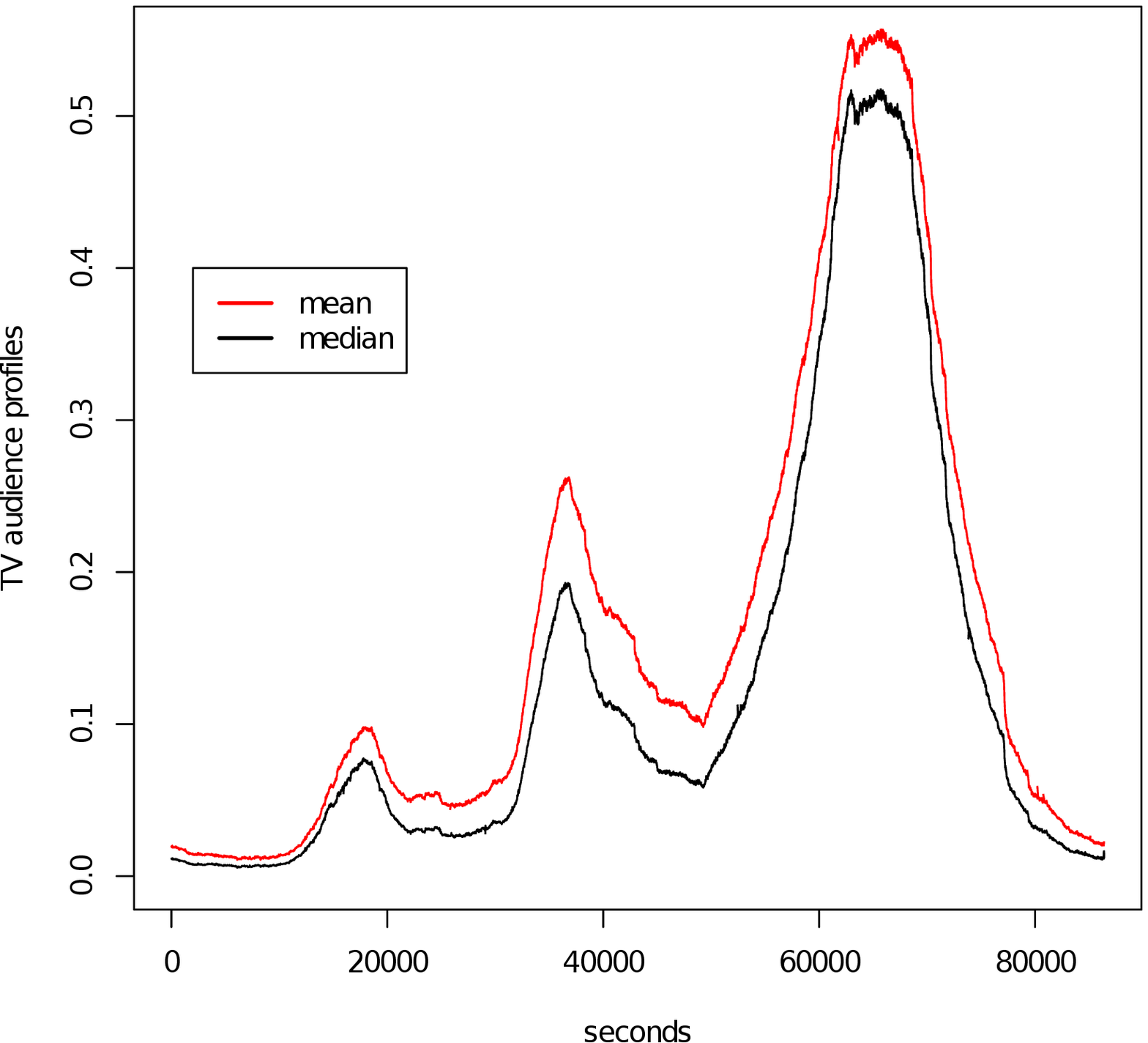}
 \caption{Estimations of the mean and the geometric median audiences, at a second scale, during the 6th september 2010.}
 \label{fig1}
 \end{center}
   \end{figure}

\section{Concluding remarks}
The experimental results confirm that averaged recursive estimators of the geometric median relying on stochastic gradient approaches are of particular interest when one has  to deal with large samples of data and potential outliers. Furthermore, when the allocated computation time is limited and fixed in advance and the data arrive online  these techniques can deal, in a recursive way, with larger sample sizes and finally provide estimations that are much more accurate than static estimation procedures.
We have also noted in the simulation experiment that they are not very sensitive to the value of the tuning parameter~$c_\gamma.$ 

One could imagine many directions for future research that certainly deserve further attention. 
Taking advantage of the rapidity of our estimation procedure, one could use resampling techniques, similar to the bootstrap, in order to approximate the asymptotic distribution of the estimator given in Theorem~\ref{prop:vitesse} and  then  build pointwise confidence intervals. Proving rigorously the  validity of such techniques is far beyond the scope of this paper. 

Our procedure can also be extended readily for online clustering, adapting the well known MacQueen algorithm (\cite{MacQueen}) to the $L_1$ context. Even if the criterion to be optimized is not convex anymore, it can be proved that stochastic gradient approaches converge almost surely to the set of stationary points (\cite{CCM10}) and thus are interesting candidates for online clustering.

Another direction of interest is online estimation of the conditional geometric median when real covariates are available.  For instance, the age or the size of the city where individual live are known by M\'ediam\'etrie and it can be possible to take such information into account in order to get varying time regression models that can also be estimated in a very fast way thanks to sequential approaches.

Finally, as noted in Section~\ref{sec:Mediametrie} the independence condition is rarely satisfied for real studies. A direction that deserves further investigation is to determine under which dependence conditions our results, such as Theorems \ref{prop:convH} and \ref{prop:vitesse},  remain true.

\section{Proofs}
\label{sec=proofs}
\subsection{Convexity --- Proofs of Proposition \ref{prp=GestConvexe} and Corollary  \ref{cor=GestConvexe} }\label{sec:proof1}
We first show that $\Phi$ is a subgradient of $G$. For points $x \notin D$, this is clear since $G$ is Fréchet differentiable. 

Pick a point $x_0$ in $D$ and recall that $\Phi_d(x_0) = \sum_{i, x_i \neq x_0} p_i \frac{x_0 - x_i}{\nrm{x_0 - x_i}}$. We have, 
\begin{align*}
  \scal{\Phi_d(x_0), y - x_0} 
  &= \sum_{i, x_i\neq x_0} p_i \frac{ \scal{x_0 - x_i, y - x_0}}{\nrm{x_0 - x_i}} \\
  &= \sum_{i, x_i\neq x_0} p_i \frac{ \scal{x_0 - x_i, y - x_i}}{\nrm{x_0 - x_i}} 
    -\sum_{i, x_i\neq x_0} p_i \nrm{x_0 - x_i} \\
  &\leq \sum_{i, x_i\neq x_0} p_i \nrm{y- x_i} 
    -\sum_{i, x_i\neq x_0} p_i \nrm{x_0 - x_i} \\
  &\leq G_d(y) - G_d(x_0),
\end{align*}
so that $\Phi_d$ is a subgradient of $G_d.$

The upper bound in Proposition \ref{prp=GestConvexe} follows immediately from \eqref{eq=niceFormula} and the assumption (A2).

   For the lower bound, thanks to \eqref{eq=niceFormula}, we only need to prove:
   \begin{align}
     \label{eq:cvxBndI}
     &\forall \alpha\in\ball,\forall u, \nrm{u} = 1,\quad & 
     \scal{u,\Gamma_\alpha u} = 
      \E\left[ \frac{\nrm {P_u(X-\alpha)}^2}{\nrm{X - \alpha}^3} \right]
      &\geq c_A,
   \end{align}
   where $P_u$ is the projection on the orthogonal of $u$. This quantity is small when $X-\alpha$ is in $\vect{u}$. 
   
   Recall that (by (A1)), $X$ is not supported on a line. 
   Consider the set of subspaces $K\subset H$ satisfying: $\forall x\in K, \Var(\scal{x,X}) = 0$. Suppose
   that this set is non-empty, and let $H'$ be a maximal element in it (this exists by Zorn's lemma). 
   The orthogonal of $H'$ has at least dimension $2$ (otherwise, we get a contradiction to A1). Let $v_1,v_2$ 
   be two orthogonal vectors in $H'^\perp$. Let $v_t = \cos(t)v_1 + \sin(t)v_2$. The map
   \[ t \mapsto \Var(\scal{v_t, X}) \]
   is continuous on a compact set. Its minimum cannot be zero (since this would contradict the maximality of $H'$). 
   Therefore there exists  a $c$ such that, for all unit $v$ in  
   the plane spanned by $(v_1,v_2)$, $\Var(\scal{X,v}) \geq c$.

   The  orthogonal of $u$ (an hyperplane) and the (2-dimensional) plane spanned by $v_1$ and $v_2$ necessarily intersect: there exists a unit vector $v\in\vect{v_1,v_2}$ such that $\scal{u,v} = 0$. Therefore, for all $y\in H$,  $\nrm{P_u y}^2 \geq \scal{y,v}^2$. In particular,  $\nrm{P_u (X-\alpha)}^2 \geq \scal{X - \alpha,v}^2$.

   Suppose first that $X$ is a.s. bounded by $K$. Then
   \[ \E\left[ \frac{\scal{v,X-\alpha}^2}{\nrm{X - \alpha}^3} \right]
   \geq  \frac{1}{(A+K)^3} \E\left[ \scal{v,X-\alpha}^2\right].  \]
   It is easily seen that the last term is bounded below by $\Var( \scal{v,X})\geq c$ and \eqref{eq:cvxBndI} holds with
   \[ c_A = \frac{1}{(K+A)^3}c.\] 
   To get rid of the boundedness assumption on $X$, we can  just choose $K$ large enough so that $\Var(\scal{v,X\ind{\nrm{X}\leq K}})$ is strictly positive for $v = v_1,v_2$.  
   
   \bigskip 

  The corollary is a consequence of Proposition \ref{prp=GestConvexe} and of the fact that $\Phi$ is a subgradient. 
Indeed, the inequality holds for $\Phi_c$ by interpolation: for an elementary proof, define $\alpha_t = (1-t)\alpha_1 + t \alpha_2$, and write
   $\Phi(\alpha_2) - \Phi(\alpha_1) = \int_0^1 f'(t) dt$ where $f(t) = \Phi(\alpha_t)$. One can then apply \eqref{eq:cvxBndI}, $t$ by $t$, with $\alpha = \alpha_t$ and $u = \frac{\alpha_2 - \alpha_1}{\nrm{\alpha_2 - \alpha_1}}$. 

   Moreover, 
   \begin{align*}
      \scal{\Phi_d(\alpha_2) - \Phi_d(\alpha_1),\alpha_2 - \alpha_1}
      &= \scal{\Phi_d(\alpha_2), \alpha_2 - \alpha_1} + \scal{\Phi_d(\alpha_1), \alpha_1 - \alpha_2} \\
      &\geq G(\alpha_2) - G(\alpha_1) + G(\alpha_1 ) - G(\alpha_2) \\
      &= 0,
   \end{align*}
   where the second line follows from \eqref{eq:subgradient}. 
   Since $\Phi = \lambda\Phi_c + (1 - \lambda)\Phi_d$, Corollary  \ref{cor=GestConvexe} is proved.

\subsection{Proof of Theorem \ref{prop:convH}.}

The proof of Theorem \ref{prop:convH}  follows  a classical strategy and consists of two steps.

\begin{lem} Under the hypotheses of Theorem \ref{prop:convH}, there is a random variable $V$ such that, $\EE{|V|^2} < \infty,$ and 
\begin{align*}
\lim_{n \rightarrow \infty} \parallel Z_{n}-m\parallel^2 & = V, \quad a.s.
\end{align*}
\label{lem1:convH}
\end{lem}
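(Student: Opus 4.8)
The plan is to show that the sequence $W_n \egaldef \nrm{Z_n - m}^2$ is a positive almost-supermartingale to which the Robbins--Siegmund convergence theorem applies, and then to establish a uniform bound on its second moment so as to control $V$ in $\lDeux$.

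First I would expand the square using the form \eqref{def=algobase}. Writing $Z_{n+1} - m = (Z_n - m) - \gamma_n \stp{n+1}$ and noting that $\nrm{\stp{n+1}} \leq 1$ (it is either $0$ or a unit vector), we obtain
\begin{align*}
  \nrm{Z_{n+1} - m}^2
  = \nrm{Z_n - m}^2 - 2\gamma_n \scal{Z_n - m, \stp{n+1}} + \gamma_n^2 \nrm{\stp{n+1}}^2.
\end{align*}
Taking the conditional expectation given $\CF_n$ and using \eqref{eq=step}, the cross term becomes $-2\gamma_n \scal{Z_n - m, \Phi(Z_n)}$, so that
\begin{align*}
  \EFn{\nrm{Z_{n+1} - m}^2}
  \leq \nrm{Z_n - m}^2 - 2\gamma_n \scal{Z_n - m, \Phi(Z_n)} + \gamma_n^2.
\end{align*}

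The key point is that the cross term has the right sign. Because $\Phi$ is a subgradient of the convex function $G$ (inequality \eqref{eq:subgradient}) and $\Phi(m) = 0$ by \eqref{eqmoment}, monotonicity gives $\scal{Z_n - m, \Phi(Z_n)} = \scal{Z_n - m, \Phi(Z_n) - \Phi(m)} \geq 0$; concretely, applying \eqref{eq:subgradient} once with $(x,y) = (Z_n, m)$ and once with $(x,y) = (m, Z_n)$ and summing the two inequalities yields exactly this. Hence
\begin{align*}
  \EFn{W_{n+1}} \leq W_n + \gamma_n^2,
\end{align*}
and since $\sum_n \gamma_n^2 < \infty$, the Robbins--Siegmund theorem (quasi-martingale convergence) shows that $W_n$ converges almost surely to some nonnegative random variable $V$.

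It remains to check $\EE{V^2} < \infty$. For this I would run the same computation one level up, for $W_n^2$. Setting $b_{n+1} = -2\gamma_n \scal{Z_n - m, \stp{n+1}} + \gamma_n^2 \nrm{\stp{n+1}}^2$, so that $W_{n+1} = W_n + b_{n+1}$, and using $\nrm{\stp{n+1}} \leq 1$, $\abs{\scal{Z_n - m, \stp{n+1}}} \leq \nrm{Z_n - m}$ together with $\EFn{b_{n+1}} \leq \gamma_n^2$, one obtains after elementary majorizations (of the type $\sqrt{W_n} \leq 1 + W_n^2$ and $W_n \leq 1 + W_n^2$) an inequality of the form
\begin{align*}
  \EFn{W_{n+1}^2} \leq (1 + C\gamma_n^2) W_n^2 + C\gamma_n^2
\end{align*}
for some finite constant $C$. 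Since the starting point is bounded, $\EE{W_0^2} < \infty$; iterating this inequality and using $\prod_n (1 + C\gamma_n^2) < \infty$ (which holds because $\sum_n \gamma_n^2 < \infty$) gives $\sup_n \EE{W_n^2} < \infty$, and Fatou's lemma yields $\EE{V^2} \leq \liminf_n \EE{W_n^2} < \infty$. The only genuinely delicate point is the bookkeeping in this last fourth-order moment estimate; everything else is the standard almost-supermartingale argument, with the convexity of $G$ supplying the crucial nonnegativity of the cross term.
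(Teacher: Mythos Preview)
Your argument for the almost sure convergence of $W_n=\nrm{Z_n-m}^2$ is essentially identical to the paper's: expand the square, take conditional expectation, use \eqref{eq=step} to turn the cross term into $-2\gamma_n\scal{Z_n-m,\Phi(Z_n)}$, observe that this is nonpositive by convexity (the paper invokes Corollary~\ref{cor=GestConvexe}, you use the subgradient inequality \eqref{eq:subgradient} directly; both give the same monotonicity), and conclude by Robbins--Siegmund. The paper even records the same intermediate identity \eqref{eq=decompositionVn}.

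Where you differ is in the integrability of the limit. The paper stops after \eqref{eq=VnBornee}, which only gives $\sup_n \EE{V_n}<\infty$, i.e.\ $V\in L^1$ (``$Z_n-m$ bounded in $L^2$''), not the stated $\EE{|V|^2}<\infty$. Your additional step---bounding $\EFn{W_{n+1}^2}$ by $(1+C\gamma_n^2)W_n^2+C\gamma_n^2$ via $|b_{n+1}|\leq 2\gamma_n\sqrt{W_n}+\gamma_n^2$ and the crude majorizations $\sqrt{W_n},W_n\leq 1+W_n^2$, then iterating and applying Fatou---is correct and actually delivers the $L^2$ claim as written in the lemma. In this sense your proof is more complete than the paper's own, which appears to establish only the weaker $L^1$ bound (harmless for the sequel, since only a.s.\ finiteness of $V$ is used downstream).
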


\begin{proof}[Proof of Lemma \ref{lem1:convH}]
Let us consider
 $ V_n \egaldef  \nrm{ Z_{n}-m}^2 .$
Recall that $Z_{n+1} = Z_n - \gamma_n \Phi(Z_n) + \gamma_n \xi_{n+1}$ (cf. \eqref{eq=algoRM}). Therefore 
\begin{align*}
  V_{n+1} &= \nrm{Z_n - m - \gamma_n \Phi(Z_n)}^2 + \gamma_n^2 \nrm{\xi_{n+1}}^2
     + 2 \gamma_n \scal{ \xi_{n+1}, Z_n - \gamma_n \Phi(Z_n) }.
\end{align*}
If we condition with respect to $\CF_n$, the last term disappears since $(\xi_{n})$ is a martingale difference sequence and it comes:
\begin{align}
  \notag
  \EFn{ V_{n+1}} 
  &= \nrm{Z_n - m - \gamma_n \Phi(Z_n)}^2 + \gamma_n^2 \EFn{\nrm{\xi_{n+1}}^2} \\
  \notag
  &= \nrm{Z_n - m}^2 - 2 \gamma_n\scal{Z_n - m, \Phi(Z_n)} 
  + \gamma_n^2 \left( \nrm{\Phi(Z_n)}^2 + \EFn{\nrm{\xi_{n+1}}^2} \right) \\
  \label{eq=decompositionVn}
  &= V_n - 2 \gamma_n\scal{Z_n - m, \Phi(Z_n)} + \gamma_n^2,
\end{align}
where we used the definition of $V_n$ and \eqref{eq=crochetXi} for the last term.
Since $G$ is convex,
using Corollary \ref{cor=GestConvexe}, we get:
\[ \scal{Z_n - m, \Phi(Z_n)} = \scal{Z_n - m, \Phi(Z_n) - \Phi(m)} \geq 0. \]
Therefore, for all $n$, 
\(
\E\left[V_{n+1}|\CF_n\right] \leq  V_n +\gamma_n^2.
\)
From the Robbins Siegmund theorem (see for instance \cite[page 18]{Duf97}), we deduce that $(V_n)$ converges almost surely to $V$. Moreover, we note  that
$Z_n - m$ is bounded in $L^2,$
\begin{equation}
  \label{eq=VnBornee}
  \forall n,\quad V_n = \EE{\nrm{Z_n - m}^2} \leq \EE{\nrm{Z_0 - m}^2} +  \sum_{k=1}^\infty \gamma_k^2 < \infty,
\end{equation}
whenever $\EE{\nrm{Z_0 - m}^2} < \infty,$ which is satisfied for example if $Z_0 = X_0 \ind{\{ \nrm{X_0}\leq M \}}, $ with $M< \infty.$
\end{proof}

We can now give the proof the  theorem.
\begin{proof}[Proof of Theorem \ref{prop:convH}]  
Lemma \ref{lem1:convH} shows that the sequence $V_n$ converges almost surely. Let us check now that its limit is zero. Let us take expectations in equation \eqref{eq=decompositionVn}:
\begin{align*}
  \EE{ V_{n+1}} 
& = \EE{ V_n} + \gamma_n^2 - 2\gamma_n \EE{ \scal{\Phi(Z_n), Z_n-m } } \nonumber \\
& = \EE{ V_0}  + \sum_{k=1}^{n} \gamma_k^2 
- 2   \sum_{k=1}^{n} \gamma_k \EE{\scal{ \Phi(Z_k), Z_k-m }}.
\end{align*}
The sequence $\sum_{k=1}^n \gamma_k \EE{\scal{\Phi(Z_k), Z_k - m}}$ has positive terms, and is bounded above by $\EE{V_0} + \sum_{k=1}^\infty \gamma_k^2$, therefore it converges. This implies in particular that 
\begin{align}
  \label{eq=cestFini}
  \sum_{n=1}^{\infty} \gamma_n \scal{\Phi(Z_n), Z_n-m}< + \infty \quad\text{a.s.}
\end{align}
This convergence cannot happen unless $Z_n$ converges to $m$. Indeed, for each $\epsilon \in ]0,1[,$ let us introduce the set
\[
  \Omega_\epsilon = \left\{ 
     \omega \in \Omega : 
     \ \exists n_\epsilon(\omega)\geq 1,  
     \forall n \geq n_\epsilon(\omega), 
  \quad
     \epsilon^2 < V_n(\omega) < \epsilon^{-2}
  \right\}.
\]
For $\omega \in \Omega_\epsilon,$ we have with Corollary  \ref{cor=GestConvexe},
\[
  \sum_{n\geq 1} \gamma_n \scal{ \Phi(Z_n(\omega)), Z_n(\omega)-m}
  \geq \left( \sum_{n\geq n_\epsilon(\omega)} \gamma_n \right)
  \inf_{\epsilon < \nrm{\alpha - m} < \epsilon^{-1} } \scal{\Phi(\alpha), \alpha -m}
  = \infty,
 \]
 which contradicts \eqref{eq=cestFini} unless $\PP(\Omega_\epsilon) = 0$. 
 Since $V_n$ converges a.s. to a finite limit, and 
 $\{\lim V_n \in[c, c^{-1}]\} \subset \Omega_{c/2}$, the only possible limit is zero:
 \begin{equation*}
   \lim \nrm{Z_n - m} = 0, \quad \text{a.s.} \qedhere
 \end{equation*}
\end{proof}

   \subsection{Proof of Lemma \ref{def:TaylorPhi} and Proposition \ref{lmm=vitesseQuadratique}}

\begin{proof}[Proof of Lemma \ref{def:TaylorPhi}] 
Consider, for $h \in \ball$, the function $f_h(t) = \Phi(m+th),$ defined for  $t \in [0,1].$ We have $f_h(0) = \Phi(m) = 0$ and $f_h(1) = \Phi(m+h).$ It is also clear that 
the first order derivative $f'_h(t)$ of function $f_h$ satisfies $f'_h(t) = \Gamma_{m+th}.$
Consequently, a Taylor expansion with integral remainder of $f_h$ about $t=0$ gives us
 \[ 
\Phi(m+h) =  \Phi(m) + \int_0^1 \Gamma_{m+th}(h) \ dt.
\]
By Lemma 5.7 in \cite{Chaud92}, there is a constant $M_A$ such that for all $t \in [0,1],$
\[
\nrm{\Gamma_{m+th} - \Gamma_m}_L \leq M_A \nrm{h}
\]
where $\nrm{.}_L$ is the usual norm for bounded linear operators. Since $\Phi(m)=0,$ one gets
\[
\nrm{\Phi(m+h)   - \Gamma_m(h)} \leq \sup_{t \in [0,1]} \nrm{\Gamma_{m+th} - \Gamma_m}_L  \nrm{h}  \leq M_A \nrm{h}^2,
\]
and this concludes the proof.
\end{proof}

\begin{proof}[Proof of Proposition \ref{lmm=vitesseQuadratique}]
The proof is composed of 5 steps.

\paragraph{Step 1 --- a spectral decomposition.}
Recall that $\Gamma_m$ is:
\begin{align}
  \notag
\Gamma_m
& = \EE { \frac{1}{\| X-m\|} \left(
    \idt - \frac{( X-m) \otimes  (X-m)}{\| X-m\|^2} \right)
    } \\
\notag
&= \EE{\nrm{X-m}^{-1}} \idt - \EE{  \frac{1}{\| X-m\|} \left(
     \frac{( X-m) \otimes  (X-m)}{\| X-m\|^2} \right)
     } \\
  \label{eq=decompositionGammaM}
  &= \EE{\nrm{X-m}^{-1}} \idt - \Delta_m.
\end{align}
Since $\Gamma_m$ is bounded and symmetric, it is self-adjoint. Moreover, the operator $\Delta_m$ defined by \eqref{eq=decompositionGammaM} is trace class: it is self-adjoint, non negative, and if $(e_j)$ is an orthonormal basis, 
\begin{align*}
\sum_j \scal{e_j,\Delta_m e_j} 
&= \sum_j \EE{ \frac{ \scal{X-m,e_j}^2}{ \nrm{X-m}^3 } } \\
&\leq \EE{ \frac{1}{\nrm{X-m}} } < \infty. 
\end{align*}

Therefore $\Delta_m$ is compact, and there is an increasing sequence of eigenvalues $(\lambda_j),$ with possible repetitions, and an orthonormal basis $(v_j)$ of eigenvectors in $H$ such that:
\begin{align*}
  \forall j \in \xN, \quad \Gamma_m v_j &= \lambda_j v_j, \\
  \sigma(\Gamma_m) &= \{\lambda_j, j\in \xN\} \cup \left\{ \EE{\nrm{X-m}^{-1}} \right\}, \\
  & \!\!\!\!\!\!\!\!\!\! \lambda_j \limite{}{j\to \infty} \EE{\nrm{X-m}^{-1}}.
\end{align*}
Moreover, thanks to (\ref{eq=bndsGammam}), the smallest eigenvalue $\lmin$ of $\Gamma_m$ is strictly positive. 
For simplicity of notation, we rewrite this decomposition as follows, 
\[
\Gamma_m x = \sum_{\lambda\in\Lambda} \lambda \scal{e_\lambda, x} e_\lambda, \quad x \in H,
\] 
where 
$\Lambda$ is the multiset $\{\lambda_j,j\in\xN\},$ that can  account for eigenspaces of dimension larger than $1.$

In the following, we will need the operators:
\begin{align*}
  \alpha_k &= \idt - \gamma_k \Gamma_m,
  &
  \beta_n  &= \alpha_n \alpha_{n-1} \cdots \alpha_1.
\end{align*}
Since $\Gamma_m$ is bounded, these operators are well defined. Introducing the sequence  of real functions, for $n \in \xN,$ 
\[
f_n(x) = \prod_{k=1}^n (1- \gamma_k x),
\]
we see that  
$f_n(\cdot)$ and $f_n^{-1}(\cdot)$ are well defined on $\sigma(\Gamma_m),$ provided $\gamma_n  \EE{\nrm{X-m}^{-1}}< 1,$ which we can assume  without loss of generality. 
Elementary analysis shows that there exist constants $c_1$, $C_2, C_3$ such that:
\begin{equation}
  \label{eq=asymptotiqueBetaN}
\begin{aligned}
 \forall x \in  \sigma(\Gamma_m), 
 \quad
 c_1 \exp\left( - s_n x \right) &\leq f_n(x) \leq C_2 \exp\left( -s_n x \right),  \\
 \abs{s_n  -  \frac{\cG}{1-\alpha} n^{1 - \alpha}} &\leq C_3,
 \end{aligned}
 \end{equation}
 where we recall that $s_n = \sum_{k=1}^{n} \gamma_k$, and $\gamma_k = \cG k^{-\alpha}$. 
Then each  operator $\beta_n$ can be also expressed  as follows:
\[
\beta_n x = \sum_{\lambda\in\Lambda} f_n(\lambda) \scal{e_\lambda,x} e_\lambda, \quad x \in H,
\]
their inverses are bounded operators, and satisfy: $\beta_n^{-1}x = \sum_{\lambda\in\Lambda} f_n^{-1}(\lambda) \scal{e_\lambda,x} e_\lambda.$ 

\paragraph{Step 2 --- Decomposition of the algorithm.}

Let us rewrite the algorithm in the following way 
\begin{align*}
Z_{n+1}&=Z_n+\gamma_n \xi_{n+1}-\gamma_n\Phi(Z_n)\\
&= Z_n+\gamma_n \xi_{n+1}-\gamma_n(\Gamma_m(Z_n-m)+\delta_n)
\end{align*}
where $\delta_n=\Phi(Z_n)-\Gamma_m(Z_n-m)$ is the difference between the gradient of $G$ and the gradient of its quadratic approximation. 
Therefore:
\begin{equation}
  \label{eq=decompositionI}
  \forall k,\quad
  Z_{k+1} - m = \alpha_k (Z_k - m)  + \gamma_k \xi_{k+1} - \gamma_k \delta_k
\end{equation}
Rewriting $\alpha_{n-1}\alpha_{n-2}\cdots \alpha_{k+1}$ as $\beta_{n-1}\beta_k^{-1}$, we get by induction, 
\begin{equation}
  \label{eq=decompositionII}
  Z_n-m=\beta_{n-1}(Z_1-m) + \beta_{n-1}M_n - \beta_{n-1}R_{n-1},
\end{equation}
where
\begin{align*}
  R_n &= \sum_{k=1}^{n-1}\gamma_k \beta_k^{-1}\delta_k 
&
M_n &= \sum_{k=1}^{n-1}\gamma_k \beta_k^{-1}\xi_{k+1}.
\end{align*}
The first two terms of \eqref{eq=decompositionII} are what we would get if $G$ was exactly quadratic: a deterministic gradient part going to $m$, and 
a noise part; $R_n$ is the error term. We will  look at each of these terms in turn. 

\paragraph{Step 3 --- The deterministic term.}

We want to bound $\beta_{n-1}(Z_1 - m)$. The asymptotic 
behaviour of $f_n$ in eq. \eqref{eq=asymptotiqueBetaN} implies that
\[
\nrm{\beta_{n-1}} \leq C_2 \exp\left( - s_n \lmin\right),
\]
where $\lmin>0$ is the smallest eigenvalue of  $\Gamma_m$. Therefore
\begin{equation}
  \label{eq=termeDeterministe}
  \EE{\nrm{\beta_{n-1}(Z_1 - m)}^2} 
  \leq C \exp\left( -2 n^{1-\alpha}\right) \EE{\nrm{Z_1 - m}^2}.
\end{equation}

\paragraph{Step 4 --- The martingale.}

The fact that the $\beta_k$ are operators (instead of real numbers) makes matters more complicated. To deal with this problem, we use the  spectral decomposition of the sequence of self-adjoint operators $(\beta_k).$ 
\newcommand{\Ml}{M^\lambda}

More precisely, we decompose $M_n = \sum_{\lambda\in\Lambda} \scal{e_\lambda,M_n} e_\lambda = \sum_\lambda \Ml_n e_\lambda$.
For each $\lambda \in \Lambda$, $\Ml_n$ is a martingale, and
\begin{align*}
  \E[(\Ml_n)^2] 
  &= \sum_{k\leq n-1} \gamma_k^2 f^{-2}_k(\lambda) \E\left[
  \scal{\xi_{k+1},e_\lambda}^2 \middle| \CF_k \right],
\end{align*}
since $\E\left[
  \scal{\xi_{k'},e_\lambda}\scal{\xi_{k+1},e_\lambda}  \middle| \CF_k \right] = 0$ when $k'<k+1.$ 
Summing now over $\lambda \in \Lambda$, we get:
\begin{align*}
  \E\left[\nrm{\beta_{n-1}M_n}^2\right]
  &= \sum_\lambda f_{n-1}^2(\lambda) \EE{(\Ml_n)^2}, \\
  %
  &\leq \sum_\lambda \sum_{k\leq n-1} \gamma_k^2 \left(
      \frac{f_{n-1}(\lambda)}{f_k(\lambda)}
      \right)^2 \EE{\scal{\xi_{k+1},e_\lambda}^2} .
\end{align*}
However, for any $k,n$, and any $\lambda \in \Lambda$,  
\[
  \frac{f_{n-1}(\lambda)}{f_k(\lambda)}
  = \prod_{j=k+1}^{n-1} \left( 1 - \lambda\gamma_j\right) 
  \leq \frac{f_{n-1}(\lmin)}{f_k(\lmin)}.
  \]
  This uniformity in $\lambda$ allows us to reconstruct $\EE{\nrm{\xi_{k+1}}^2}$, which is bounded by $1$, thanks to \eqref{eq=crochetXi}.  We obtain:
\begin{align*}
  \notag
  \E\left[\nrm{\beta_{n-1}M_n}^2\right]
  &\leq \sum_{k\leq n-1} \gamma_k^2 \left(
      \frac{f_{n-1}(\lmin)}{f_k(\lmin)}
      \right)^2 \sum_\lambda \EE{\scal{\xi_{k+1},e_\lambda}^2} ,\\
  \notag
    &\leq \sum_{k\leq n-1} \gamma_k^2 \left(
      \frac{f_{n-1}(\lmin)}{f_k(\lmin)}
      \right)^2 \EE{\nrm{\xi_{k+1}}^2}, \\
  \notag
    &\leq \sum_{k\leq n-1} \gamma_k^2 \left(
      \frac{f_{n-1}(\lmin)}{f_k(\lmin)}
    \right)^2. 
\end{align*}
Now we use the bounds \eqref{eq=asymptotiqueBetaN} on $\beta_n$: 
\begin{align}
  \notag
  \E\left[\nrm{\beta_{n-1}M_n}^2\right]
    & \leq \frac{C_2^2}{c_1^2} \sum_{k\leq n-1} \gamma_k^2
      \exp\left(- \sum_{j=k+1}^n \gamma_j \right) \\
      \label{eq=belleSomme}
    &\leq C \sum_{k\leq n-1} \gamma_k^2 \exp\left( - \frac{1}{1-\alpha} \left( n^{1 - \alpha} - k^{1- \alpha} \right) \right). 
\end{align}
The exponential terms are very small when $k$ is much smaller than $n$, therefore we isolate the last terms.  To do that, we choose $l(n)$ such that, 
\begin{equation}
  \label{eq=choixDeL}
l(n)^{1-\alpha} = n^{1 - \alpha} - c_\alpha\ln(n) \ , 
\end{equation}
with $c_\alpha$ to be chosen later. 
The first part of the sum \eqref{eq=belleSomme} (for $k\leq l(n)$) gives us:
\begin{align}
  \notag
  \sum_{k\leq l(n)} \gamma_k^2 \exp\left(
     - \frac{1}{1-\alpha} \left( n^{1 - \alpha} - k^{1- \alpha} \right) \right) 
  &\leq 
  \sum_{k\leq l(n)} \gamma_k^2 \exp\left(
     - \frac{c_\alpha}{1-\alpha} \ln(n) \right) \\
     \label{eq=belleSommeDebut}
  &\leq
  \cG^2\sum_{k\leq n}  k^{-2 \alpha - \frac{c_\alpha}{1-\alpha}}.
\end{align}
This can be made smaller than any prescribed inverse power of $n$, if we choose $c_\alpha$ large enough. In the second part of the sum \eqref{eq=belleSomme}, for $k>l(n)$, we bound the exponential by $1$ and $\gamma_k$ by $\gamma_{l(n)}$: 
\begin{align*}
  \sum_{k > l(n)} \gamma_k^2 \exp\left(
     - \frac{1}{1-\alpha} \left( n^{1 - \alpha} - k^{1- \alpha} \right) \right) 
     &\leq (n-l(n)) \gamma_{l(n)}^2.
 \end{align*}
 The number of terms $n- l(n)$ is equivalent to $\frac{c_\alpha}{1-\alpha} \ln(n)n^\alpha$, and $\gamma_{l(n)}\sim \cG n^{-\alpha}$. Therefore, the whole second term is equivalent to $ c \ln(n) n^{-\alpha},$ where $c$ depends on $c_\alpha$ and $c_\gamma.$ For $c_\alpha$ large enough, this dominates the first term \eqref{eq=belleSommeDebut}. Finally we get:
\begin{eqnarray}
\label{eq=termeMarting}
 \E\left[ \nrm{ \beta_{n-1} M_n}^2 \right] &\leq & C \frac{\ln(n)}{n^\alpha}.
 \end{eqnarray}

\paragraph{Step 5 --- the error term and the conclusion.}

The error term is $\beta_{n-1} \sum_{k=1}^n \gamma_k \beta_k^{-1}\delta_k$, where $\delta_k = \Phi(Z_k) - \Gamma_m(Z_k -m)$.  With Lemma~\ref{def:TaylorPhi}, we get that
\begin{equation}
  \exists r,C_r \quad
  \forall k, \nrm{Z_k - m} \leq r \implies \nrm{\delta_k} \leq C_r \nrm{Z_k - m}^2. 
  \label{eq=deltaK}
\end{equation}
Since $Z_n$ converges a.s. to $m$, we deduce two things about $\delta_k$: it is almost surely bounded, and 
\eqref{eq=deltaK} becomes a.s. eventually true. To use these facts we introduce the following sequence of events:
\[
  \Omega_N 
  = \left\{ \omega,
  \begin{array}{r}
    \forall n \geq N, \forall k\geq n - l(n), \quad  \nrm{Z_k(\omega) - m} \leq 1/K \\
    \qquad \text{ and } \nrm{\delta_k(\omega)} \leq C_r\nrm{Z_k(\omega) - m}^2\\
    \multicolumn{1}{l}{
    \forall k, \nrm{\delta_k(\omega)} \leq N.
    }
  \end{array}
  \right\},
\]
for a value of $K$ to be chosen later, and $l(n)$ defined by \eqref{eq=choixDeL}. This sequence is increasing and $\bigcup \Omega_N  = \Omega$; from now on we work on $\Omega_N$. 

Once more, since $\beta_{n-1}\beta_k^{-1}$ is very small when $k$ is much smaller than $n$, only the last terms in the sum defining $R_n$ matter. 
This is why we re-use the definition of $l(n)$ and cut the sum in two parts. For $\omega \in \Omega_N$, and $n\geq N$, 
\begin{align*}
   \nrm{\beta_{n-1}R_n}^2
  &\leq \left(
    \sum_{k=1}^n \gamma_k \nrm{\beta_{n-1}\beta_k^{-1}} \nrm{\delta_k}
    \right)^2 \\
  &\leq 2 N^2 \left(
      \sum_{k=1}^{l(n)} \gamma_k \nrm{\beta_{n-1}\beta_k^{-1}}
      \right)^2
      + 2 C_r^2 \left( \sum_{k=l(n) + 1}^n \gamma_k \nrm{Z_k - m}^2\right)^2 \\
  &\leq 2 N^2 \left(
      \sum_{k=1}^{l(n)} \gamma_k \nrm{\beta_{n-1}\beta_k^{-1}}
      \right)^2
      + 2\frac{ C_r^2}{K^2}(n-l(n))\gamma_{l(n)}  \sum_{k=l(n) + 1}^n \gamma_k \nrm{Z_k - m}^2.
\end{align*}
where we used the crude bound $\nrm{\delta_k}\leq N$ in the first part, and for the second part,  $\nrm{\beta_{n-1} \beta_k^{-1}}\leq 1$  and the definition of $\Omega_N$.

As before, it is easy to see that the first term is bounded by any prescribed
inverse power of $n$, say $n^{-42}$.  
For the second term, 
we already know that $(n-l(n))\gamma_{l(n)}$ is bounded. Therefore, on $\Omega_N$ and  for $n\geq N,$
 
\begin{equation}
  \label{eq=bidule}
  \nrm{\beta_{n-1}R_n}^2
  \leq 
  \frac{CN^2}{n^{42}} + \frac{C}{K^2} 
        \sum_{k=l(n) +1}^n \gamma_k \nrm{Z_k - m}^2.
\end{equation}

Combining now \eqref{eq=decompositionII}, \eqref{eq=termeDeterministe}, \eqref{eq=termeMarting}
and  \eqref{eq=bidule}, we get, for $n\geq N$ and some new  constant $C$ 
\begin{align*}
  \EE{\ind{\Omega_N} \nrm{Z_n - m}^2 }  
  &\leq \frac{C \ln(n)}{n^\alpha} + \frac{C}{K^2} \sum_{k=l(n)+1}^n \gamma_k \EE{\ind{\Omega_N} \nrm{Z_k -m}^2} \\
  &\leq \frac{C \ln(n)}{n^\alpha} + \frac{C'}{K^2} \sup_{l(n)<k\leq n} \EE{\ind{\Omega_N} \nrm{Z_k -m}^2}.
\end{align*}
 
Let us choose $K$ such that $K^2 \geq 2C'$. Then
\[
\forall n\geq N, \quad   \EE{\ind{\Omega_N} \nrm{Z_n - m}^2 } \leq \frac{C \ln(n)}{n^\alpha} + \frac{1}{2} \max_{l(n)< k \leq n} \EE{\ind{\Omega_N} \nrm{Z_k - m}^2 }.
\]
Defining $u_n = \EE{\ind{\Omega_N} \nrm{Z_n - m}^2}$, this reads:
\begin{equation}
  \label{eq:induction}
\forall n\geq N, \quad  u_n \leq \frac{C \ln(n)}{n^\alpha} + \frac{1}{2} \max_{l(n)< k \leq n} u_k.
\end{equation}
Let us prove by induction that, for some $N'$ large enough,
and for $C'' = 4C$, 
\[ \forall n\geq N', \quad  u_n \leq \frac{C''\ln(n)}{n^\alpha}. \]
Suppose that $u_k\leq \frac{C''\ln(k)}{k^\alpha}$ 
for all $k\leq n$, and let us prove that $u_{n+1} \leq \frac{C'' \ln(n+1)}{(n+1)^\alpha}$. Using \eqref{eq:induction}, we know that:
\begin{align*}
  u_{n+1} 
  &\leq \frac{C \ln(n+1)}{(n+1)^\alpha} + \frac{1}{2} \max_{l(n+1) < k \leq n+1} u_k \\
  &\leq \frac{C \ln(n+1)}{(n+1)^\alpha} + \frac{1}{2} \max\left(u_{n+1}, \frac{C'' \ln(l(n+1))}{l(n+1)^\alpha}\right) 
\end{align*}
If the max on the right hand side is $u_{n+1}$, we get:
\[ \frac{1}{2} u_{n+1} \leq \frac{C \ln(n+1)}{(n+1)^\alpha}\]
which is the desired result since  $2C \leq 4C = C''$.  
If this is not the case, then the $\max$ is $\frac{C'' \ln(l(n+1))}{l(n+1)^\alpha}$. However, $l(n+1) \sim (n+1)$ 
so for $n$ larger than some $N'$, $\frac{\ln(l(n+1))}{l(n+1)^\alpha} \leq \frac{3}{2} \frac{\ln(n+1)}{(n+1)^\alpha} $. 
Hence
\begin{align*}
  u_{n+1} &\leq 
  \frac{C \ln(n+1)}{(n+1)^\alpha} + \frac{3}{4} \frac{C'' \ln(n+1)}{(n+1)^\alpha}  \\
  &\leq 4C \frac{\ln(n+1)}{(n+1)^\alpha}  
  =  C''   \frac{\ln(n+1)}{(n+1)^\alpha}.
 \end{align*}
This concludes the induction step and the  proof of Proposition \ref{lmm=vitesseQuadratique}. 
\end{proof}

\subsection{Proof of Theorem \ref{prop:vitesse}}
  We use the same decomposition as in \cite{Pel00}. It consists in linearizing the target function $\Phi$ around the true value $m.$ Recall the following decomposition of the error \eqref{eq=decompositionI},
\[
  \forall k,\quad
  Z_{k+1} - m = (\idt - \gamma_k \Gamma_m) (Z_k - m)  + \gamma_k \xi_{k+1} - \gamma_k \delta_k,
\]
where $\xi_k$ is a martingale difference sequence and $\delta_k$ are error terms, $\delta_k =  \Phi(Z_k)-\Gamma_m(Z_k-m)$. 
Defining now,
\[
T_n \egaldef Z_n - m, \quad 
\Tbar_n\egaldef \Zbar_n - m 
\quad \text{and} \quad
\newM_{n+1}\egaldef \sum_{k=1}^n\xi_{k+1},
\]
and rearranging the previous expression, we obtain:
\[
  \Gamma_m T_k  = \xi_{k+1} - \delta_k + \frac{1}{\gamma_k}\left(T_k - T_{k+1}\right).
\]
Summing these equalities, it comes,
\[
n\Gamma_m\Tbar_n = 
\sum_{k=1}^n \frac{1}{\gamma_k}\left( T_k - T_{k+1} \right)
- \sum_{k=1}^n\delta_k
+  \newM_{n+1}.
\]
Applying  Abel's transform, and dividing by $\sqrt{n}$ yields:
\[
\sqrt{n}\Gamma_m\Tbar_n = 
\frac{1}{\sqrt{n}}\left(
\frac{T_1}{\gamma_{1}}
- \frac{T_{n+1}}{\gamma_{n}}
+ \sum_{k=2}^n T_k\left[
    \frac{1}{\gamma_k}-\frac{1}{\gamma_{k-1}}
  \right]
- \sum_{k=1}^n\delta_k
\right)
+ \frac{1}{\sqrt{n}} \newM_{n+1}.
\]
To prove that last term is a martingale for which the CLT holds,
\[\frac{\newM_n}{\sqrt{n}} \cvl \mathcal{N}\left(0, \covLimite\right),\]
we need  to check that the assumptions of Theorem 5.1 in \citep{Jak88} are fulfilled. We first have  that the martingale difference sequence is \textit{a.s.} bounded, $\forall n \nrm{\xi_n}\leq 2.$ 
Let us define
\begin{equation*}
\covLimite_n = \EFn{\xi_{n+1}\otimes \xi_{n+1} },
\end{equation*}
which can also be decomposed as follows
\begin{equation*}
\covLimite_n = \EFn{ \frac{(X-Z_n)}{\nrm{X-Z_n}} \otimes \frac{(X-Z_n)}{\nrm{X-Z_n}}} - \Phi(Z_n)\otimes\Phi(Z_n).
\end{equation*}
Since $\Phi(m)=0,$  we have by a direct computation,
\[\nrm{\Phi(Z_n)} \leq \EE{\frac{2}{\nrm{X-m}}} \nrm{Z_n - m}.
\]
Using now, 
for $(a,b) \in H \times H,$ the inequality $\nrm{a\otimes b}_L \leq \nrm{a} \nrm{b},$ where $\nrm{a\otimes b}_L$ is the usual the norm for linear operators, we directly get, with Theorem \ref{prop:convH},
\[
\nrm{\Phi(Z_n)\otimes\Phi(Z_n)}_L \rightarrow 0, \quad a.s.
\]
With similar arguments, it is easy to show that
\begin{align*}
\nrm{\covLimite - \EFn{ \frac{(X-Z_n)}{\nrm{X-Z_n}} \otimes \frac{(X-Z_n)}{\nrm{X-Z_n}}} }_L & \leq  2 \EFn{\nrm{\frac{(X-Z_n)}{\nrm{X-Z_n}}  - \frac{(X-m)}{\nrm{X-m}}}} \\
 & \leq 4 \EE{\frac{1}{\nrm{X-m}}} \nrm{Z_n - m},
\end{align*}
so that $\nrm{\covLimite_n - \covLimite}_L \rightarrow 0 \ a.s.,$  when $n$ tends to infinity.
 Then condition 5.2 in \citep{Jak88} is satisfied and is a consequence of a direct application of Chow's Lemma, see for instance \cite[page 22]{Duf97}.

 Now, it remains to prove that
\begin{equation}
  \label{eq=tripleReste}
  \frac{1}{\sqrt{n}}\left(
    \frac{T_{n+1}}{\gamma_{n}} 
    - \sum_{k=2}^n T_k \left[\frac{1}{\gamma_k}-\frac{1}{\gamma_{k-1}}\right]
    + \sum_{k=1}^n \delta_k 
  \right)
  \cvp 0.
\end{equation}
Let us denote by $A_n$, $A'_n$ and $A''_n$ the three terms.

Recall that $\EE{\ind{\Omega_N} \nrm{T_n}^2} \leq C_N \frac{\ln(n)}{n^\alpha},$ thanks to Proposition \ref{lmm=vitesseQuadratique}. 
For the first term $A_n = \frac{T_{n+1}}{\sqrt{n}\gamma_n}$, we have:
\begin{align*}
  \EE{\ind{\Omega_N} \nrm{A_n}^2}
  &\leq C'_N n^{2\alpha - 1} \frac{\ln(n)^2}{n^{2\alpha}} = \frac{C'_N \ln(n)^2}{n}.
\end{align*}
Therefore $A_n \cvp 0$. 

Let us turn to the second term $A'_n$. Since 
\( \gamma_k^{-1} - \gamma_{k-1}^{-1} \leq 2\alpha c_{\gamma}^{-1}  k^{\alpha - 1}, \)
we have, for two positive constants $C_0, C_1$, 
\begin{align*}
  \EE{\nrm{A'_n}\ind{\Omega_N}}
  &\leq \frac{2\alpha c_{\gamma}^{-1}}{\sqrt{n}} \sum_{k\leq n} \EE{\ind{\Omega_N}\nrm{T_k}} k^{\alpha - 1} \\
  &\leq \frac{C_0}{\sqrt{n}}\sum_{k \leq n} \sqrt{\ln(k)} k^{\alpha/2 - 1} \\
  &\leq C_1\sqrt{\ln(n)} n^{\alpha/2 - 1/2},
\end{align*}
which goes to zero since $\alpha < 1$. Therefore $A'_n\cvp 0$. 

Finally, for the last term $A''_n$, since there exists a positive constant $C_2$ such that $\nrm{\delta_k} \leq C_2 \nrm{Z_k - m}^2$, we have:
\begin{align*}
  \EE{\ind{\Omega_N} \nrm{A''_n}} 
  &\leq \frac{1}{\sqrt{n}}\sum_{k \leq n} \EE{\ind{\Omega_N} \nrm{T_k}^2} \\
  &\leq \frac{C_N}{\sqrt{n}} \sum_{k \leq n} \ln(k)k^{-\alpha}. 
\end{align*}
Since the right hand side term converges to zero (as can be seen e.g. by Kronecker's lemma, using the fact that $\alpha > 1/2$), $C_n \cvp 0$, therefore \eqref{eq=tripleReste} holds, and Theorem~\ref{prop:vitesse} is finally proved.

\medskip

\noindent \textbf{Acknowledgements.} We would like to thank the referees for their helpful and valuable suggestions. We also thank the company M\'ediam\'etrie for allowing us to illustrate our methodologies with their data.

\bibliographystyle{apalike}
\bibliography{biblio_mediane}

\def\cprime{$'$}
\begin{thebibliography}{}

\bibitem[{A}rnaudon et~al., 2010]{ADPY10}
{A}rnaudon, M., {D}ombry, C., {P}han, A., and {Y}ang, L. (2010).
\newblock {S}tochastic algorithms for computing means of probability measures.
\newblock Preprint,
  \url{http://hal.archives-ouvertes.fr/hal-00540623/PDF/algo_means4.pdf}.

\bibitem[Benveniste et~al., 1990]{Benveniste-book90}
Benveniste, A., M{\'e}tivier, M., and Priouret, P. (1990).
\newblock {\em Adaptive Algorithms and Stochastic Approximations}, volume~22 of
  {\em Applications of Mathematics}.
\newblock Springer-Verlag, New York.

\bibitem[Bottou, 2010]{Bot10}
Bottou, L. (2010).
\newblock Large-scale machine learning with stochastic gradient descent.
\newblock In Lechevallier, Y. and Saporta, G., editors, {\em Compstat 2010},
  pages 177--186. Physica Verlag, Springer.

\bibitem[Cadre, 2001]{Cad01}
Cadre, B. (2001).
\newblock Convergent estimators for the {$L\sb 1$}-median of a {B}anach valued
  random variable.
\newblock {\em Statistics}, 35(4):509--521.

\bibitem[Cardot et~al., 2010a]{CardCC10}
Cardot, H., C\'enac, P., and Chaouch, M. (2010a).
\newblock Stochastic approximation to the multivariate and the functional
  median.
\newblock In Lechevallier, Y. and Saporta, G., editors, {\em Compstat 2010},
  pages 421--428. Physica Verlag, Springer.

\bibitem[Cardot et~al., 2010b]{CCM10}
Cardot, H., C\'enac, P., and Monnez, J.-M. (2010b).
\newblock Fast clustering of large datasets with sequential $k$-medians : a
  stochastic gradient approach.
\newblock Technical report, Institut de Math\'ematiques de Bourgogne.

\bibitem[Chaouch and Goga, 2010]{ChGo10}
Chaouch, M. and Goga, C. (2010).
\newblock Design-based estimation for geometric quantiles with application to
  outliers detection.
\newblock {\em Computational Statistics and Data Analysis}, 54:2214--2229.

\bibitem[Chaudhuri, 1992]{Chaud92}
Chaudhuri, P. (1992).
\newblock Multivariate location estimation using extension of {$R$}-estimates
  through {$U$}-statistics type approach.
\newblock {\em Ann. Statist.}, 20:897--916.

\bibitem[Chaudhuri, 1996]{Cha96}
Chaudhuri, P. (1996).
\newblock On a geometric notion of quantiles for multivariate data.
\newblock {\em J. Amer. Statist. Assoc.}, 91(434):862--872.

\bibitem[Cuevas et~al., 2007]{CuevFF07}
Cuevas, A., Febrero, M., and Fraiman, R. (2007).
\newblock Robust estimation and classification for functional data via
  projection-based depth notions.
\newblock {\em Computational Statistics}, 22:481--496.

\bibitem[Dippon and Walk, 2006]{DW06}
Dippon, J. and Walk, H. (2006).
\newblock The averaged {R}obbins-{M}onro method for linear problems in a
  {B}anach space.
\newblock {\em J. Theoret. Probab.}, 19(1):166--189.

\bibitem[Duflo, 1997]{Duf97}
Duflo, M. (1997).
\newblock {\em Random iterative models}, volume~34 of {\em Applications of
  Mathematics (New York)}.
\newblock Springer-Verlag, Berlin.
\newblock Translated from the 1990 French original by Stephen S. Wilson and
  revised by the author.

\bibitem[Fraiman and Muniz, 2001]{FrMu01}
Fraiman, R. and Muniz, G. (2001).
\newblock Trimmed means for functional data.
\newblock {\em TEST}, 10:419--440.

\bibitem[Gervini, 2008]{Ger08}
Gervini, D. (2008).
\newblock Robust functional estimation using the median and spherical principal
  components.
\newblock {\em Biometrika}, 95(3):587--600.

\bibitem[Gower, 1974]{Gow74}
Gower, J.~C. (1974).
\newblock Algorithm as 78: The mediancentre.
\newblock {\em Journal of the Royal Statistical Society. Series C (Applied
  Statistics)}, 23(3):466--470.

\bibitem[Haberman, 1989]{hab89}
Haberman, J. (1989).
\newblock Concavity and estimation.
\newblock {\em Ann. Statist.}, 17:1631--1661.

\bibitem[Haldane, 1948]{Hal48}
Haldane, J. B.~S. (1948).
\newblock {Note on the median of a multivariate distribution}.
\newblock {\em Biometrika}, 35(3-4):414--417.

\bibitem[Huber and Ronchetti, 2009]{HubR2009}
Huber, P. and Ronchetti, E. (2009).
\newblock {\em Robust Statistics}.
\newblock John Wiley and Sons, second edition.

\bibitem[Jakubowski, 1988]{Jak88}
Jakubowski, A. (1988).
\newblock Tightness criteria for random measures with application to the
  principle of conditioning in {H}ilbert spaces.
\newblock {\em Probab. Math. Statist.}, 9(1):95--114.

\bibitem[Kemperman, 1987]{Kem87}
Kemperman, J. H.~B. (1987).
\newblock The median of a finite measure on a {B}anach space.
\newblock In {\em Statistical data analysis based on the {$L\sb 1$}-norm and
  related methods ({N}euch\^atel, 1987)}, pages 217--230. North-Holland,
  Amsterdam.

\bibitem[Koltchinskii, 1997]{Kol97}
Koltchinskii, V.~I. (1997).
\newblock {$M$}-estimation, convexity and quantiles.
\newblock {\em Ann. Statist.}, 25(2):435--477.

\bibitem[Kushner and Clark, 1978]{Kushner}
Kushner, H.~J. and Clark, D.~S. (1978).
\newblock {\em Stochastic Approximation Methods for Constrained and
  Unconstrained Systems}.
\newblock Springer-Verlag, Berlin.

\bibitem[Kushner and Yin, 2003]{KY03}
Kushner, H.~J. and Yin, G.~G. (2003).
\newblock {\em Stochastic approximation and recursive algorithms and
  applications}, volume~35 of {\em Applications of Mathematics (New York)}.
\newblock Springer-Verlag, New York, second edition.
\newblock Stochastic Modelling and Applied Probability.

\bibitem[Li and Shao, 2001]{LiShao2001}
Li, W. and Shao, Q.-M. (2001).
\newblock Gaussian processes: Inequalities, small ball probabilities and
  applications.
\newblock In Rao, C. and Shanbhag, D., editors, {\em Stochastic Processes:
  Theory and Methods. Handbook of Statistics}, volume~19, pages 533--598.
  Elsevier, New York.

\bibitem[Ljung et~al., 1992]{Ljung}
Ljung, L., Pflug, G., and Walk, H. (1992).
\newblock {\em Stochastic Approximation and Optimization of Random Systems}.
\newblock Birkh\"auser, Boston.

\bibitem[MacQueen, 1967]{MacQueen}
MacQueen, J. (1967).
\newblock Some methods for classification and analysis of multivariate
  observations.
\newblock In {\em Proc. {F}ifth {B}erkeley {S}ympos. {M}ath. {S}tatist. and
  {P}robability ({B}erkeley, {C}alif., 1965/66)}, pages Vol. I: Statistics, pp.
  281--297. Univ. California Press, Berkeley, Calif.

\bibitem[Nazarov, 2009]{Nazarov2009}
Nazarov, A. (2009).
\newblock Exact $l_2$-small ball asymptotics of gaussian processes and the
  spectrum of boundary-value problems.
\newblock {\em J. Theoret. Probab.}, 22:640--665.

\bibitem[Pelletier, 2000]{Pel00}
Pelletier, M. (2000).
\newblock Asymptotic almost sure efficiency of averaged stochastic algorithms.
\newblock {\em SIAM J. Control Optim.}, 39(1):49--72 (electronic).

\bibitem[Polyak and Juditsky, 1992]{PolyakJud92}
Polyak, B. and Juditsky, A. (1992).
\newblock Acceleration of stochastic approximation.
\newblock {\em SIAM J. Control and Optimization}, 30:838--855.

\bibitem[{R Development Core Team}, 2010]{R10}
{R Development Core Team} (2010).
\newblock {\em R: A Language and Environment for Statistical Computing}.
\newblock R Foundation for Statistical Computing, Vienna, Austria.
\newblock {ISBN} 3-900051-07-0.

\bibitem[Ruppert, 1985]{Rup85}
Ruppert, D. (1985).
\newblock A {N}ewton-{R}aphson version of the multivariate {R}obbins-{M}onro
  procedure.
\newblock {\em Ann. Statist.}, 13(1):236--245.

\bibitem[Smale and Yao, 2006]{SY06}
Smale, S. and Yao, Y. (2006).
\newblock Online learning algorithms.
\newblock {\em Found. Comput. Math.}, 6(2):145--170.

\bibitem[Small, 1990]{Sma90}
Small, C.~G. (1990).
\newblock A survey of multidimensional medians.
\newblock {\em International Statistical Review / Revue Internationale de
  Statistique}, 58(3):263--277.

\bibitem[Vardi and Zhang, 2000]{VZ00}
Vardi, Y. and Zhang, C.-H. (2000).
\newblock The multivariate {$L\sb 1$}-median and associated data depth.
\newblock {\em Proc. Natl. Acad. Sci. USA}, 97(4):1423--1426 (electronic).

\bibitem[Walk, 1977]{Wal77}
Walk, H. (1977).
\newblock An invariance principle for the {R}obbins-{M}onro process in a
  {H}ilbert space.
\newblock {\em Z. Wahrscheinlichkeitstheorie und Verw. Gebiete},
  39(2):135--150.

\end{thebibliography}

\end{document}